\newtheorem{theorem}{Theorem}[section]
\newtheorem{thm}[theorem]{Theorem}
\newtheorem{lem}[theorem]{Lemma}
\newtheorem{proposition}[theorem]{Proposition}
\newtheorem{corollary}[theorem]{Corollary}
\theoremstyle{definition}
\newtheorem{defn}[theorem]{Definition}
\theoremstyle{remark}
\newtheorem{rem}[theorem]{Remark}
\numberwithin{equation}{section}
 \DeclareMathAlphabet{\mathpzc}{OT1}{pzc}{m}{it}
 \newcommand{\E}{\mathbb{E}}            
 \newcommand{\e}{\varepsilon}
 \newcommand{\p}{\partial}
 \newcommand{\Ll}{\langle}
 \newcommand{\Rr}{\rangle}
 \newcommand{\N}{\mathbb{N}}
 \newcommand{\R}{\mathbb{R}}
 \newcommand{\PP}{\mathbb{P}}
 \newcommand{\mcl}{\mathcal}
 \newcommand{\Be}{\begin{equation}}
 \newcommand{\Ee}{\end{equation}}
 \newcommand{\Bs}{\begin{split}}
 \newcommand{\Es}{\end{split}}
  \newcommand{\Bes}{\begin{equation*}}
 \newcommand{\Ees}{\end{equation*}}
 \newcommand{\BT}{\begin{thm}}
 \newcommand{\ET}{\end{thm}}
 \newcommand{\Bp}{\begin{proof}}
 \newcommand{\Ep}{\end{proof}}
 \newcommand{\BL}{\begin{lem}}
 \newcommand{\EL}{\end{lem}}
 \newcommand{\BP}{\begin{proposition}}
 \newcommand{\EP}{\end{proposition}}
 \newcommand{\BC}{\begin{corollary}}
 \newcommand{\EC}{\end{corollary}}
 \newcommand{\BR}{\begin{rem}}
 \newcommand{\ER}{\end{rem}}
 \newcommand{\BD}{\begin{defn}}
 \newcommand{\ED}{\end{defn}}
 \newcommand{\BI}{\begin{itemize}}
 \newcommand{\EI}{\end{itemize}}
 \newcommand{\dif}{{\rm d}}
 \newcommand{\re}{{\rm e}}
 \newcommand{\tl}{\tilde}
\begin{document}
\title
[Smooth densities of SDEs with stable like noises] {Smooth densities of stochastic differential equations forced by degenerate stable type noises}
\author[L. Xu]{Lihu Xu}
\subjclass[2000]{}
\keywords{}
\date{}
\maketitle
\begin{abstract}
Using the Bismut's approach to Malliavin calculus, we introduce a simplified Malliavin matrix (\cite{No86}) for stochastic differential
equations (SDEs) force by degenerate stable like noises. For the degenerate SDEs driven by Wiener noises, one can derive a Norris type lemma and use it \emph{iteratively} to prove
the smoothness of density functions. Unfortunately, Norris type lemma is very hard to be iteratively applied to SDEs with stable like noises. In this paper,
we derive a simple inequality as a replacement and use it to show that two families of
degenerate SDEs with stable like noises admit smooth density functions. One family is the linear SDEs studied by Priola and Zabczyk (\cite{PrZa09}), under some additional assumption we can iteratively use the inequality to get the smoothness of the density. The other family is the general SDEs with stable like noises, we can apply this inequality only \emph{one} time and thus derive that the SDEs admit smooth density if the \emph{first} order Lie brackets span $\R^d$. The crucial step in this paper is estimating the smallest eigenvalue of the simplified Malliavin matrix, which only uses some elementary facts of Poisson processes and undergraduate level ordinary differential equations.
\end{abstract}

 \ \\
\section{Introduction}
We are concerned with smooth densities for the degenerate stochastic
differential equations forced by stable like noises as follows:
\begin{equation} \label{e:SDE}
\begin{cases}
\dif X_t=a(X_t) \dif t+B \dif L_t, \\
X_0=x,
\end{cases}
\end{equation}
where $X_t \in \R^d$ for each $t \ge 0$, $x \in \R^d$ and the hypotheses of
$A, B, L_t$ will be stated below.
We shall introduce a simplified Malliavin matrix associated to Eq. \eqref{e:SDE} and use it to study
the smoothness of the associated transition probability densities.

As $a(x)$ is linear and the classical Kalman rank condition holds,
Priola and Zabczyk proved by Fourier analysis that transition probabilities associated to Eq. \eqref{e:SDE} admit smooth densities (\cite{PrZa09}) for a large family of
$L_t$. Under some additional assumptions on $L_t$, our results give a new proof for theirs. When $a(x)$ is a general
bounded smooth function, we show that Eq. \eqref{e:SDE} admits smooth density functions  as long as
the first order Lie brackets span $\R^d$. Our results seem to be completely new.
\vskip 3mm

Let us also compare our results with some known results on Malliavin calculus on SDEs with jump processes. \cite{BaCl11}
studied integration by parts for the jump processes with their jumps depending on the particle positions. \cite{KT01,Ta02} also studied
the density smoothness of the transition probabilities of a family of SDEs forced by
jump processes, which seems not to cover our results.
\cite{Zha12,Zha13} studied the same problems as ours for degenerate SDEs forced by symmetric $\alpha$-stable noises. For more research in this direction,
we refer to \cite{BaCr86,BiGrJa86,Bis82,Bis83,BoKu11,Cas09,IsKu06,KT01,Kus10,NoSi06,Nua06,Pic96, SoZh14}.

Let us specify our method with more details as below. For the degenerate SDEs driven by Wiener noises, one can derive a Norris type lemma and use it \emph{iteratively} to prove
the smoothness of density functions. Unfortunately, Norris type lemma is very hard to be iteratively applied to SDEs with stable like noises. In this paper,
we derive a simple (coercive) inequality as a replacement and use it to estimate the smallest eigenvalue of our simplified Malliavin matrix.
For the linear SDEs studied by Priola and Zabczyk, under some additional assumption we can use this inequality iteratively to get the smoothness of the densities.
 For the general SDEs with stable like noises, we can apply this inequality only one time and thus derive that the SDEs admit smooth density if the \emph{first} order Lie brackets span $\R^d$. The crucial step in this paper is estimating the smallest eigenvalue of the simplified Malliavin matrix, which only uses some elementary facts of Poisson processes and undergraduate level ordinary differential equations.

{\bf Acknowledgements:} The author would like to gratefully thank Martin Hairer for his encouragement and many suggestions for revising the 2013 version of this paper.
He also would like to gratefully thank Zhen-Qing Chen, Zhao Dong, Yulin Song, Tusheng Zhang and Xicheng Zhang for their helpful discussions. Special thanks are due to
Xicheng Zhang for pointing out the references \cite{BiGrJa86}-\cite{Bis83}. Part of work was done during the author visiting USTC and WHU.


\section{Some preliminary of L\'evy processes and main results}
Denote $\R^d_0=\R^d\setminus \{0\}$. Let $L_t$ be a pure jump process with c\`{a}dl\`{a}g trajectories, it is well known that
there exist a Poisson random measure $N$ on $(\R^d_0 \times \R^+, \mcl B(\R^d_0 \times \R^+))$ and a L\'evy intensity measure
$\nu$ on $(\R^d_0, \mcl B(\R^d_0))$ associated to $L_t$, such that
\Bes
\nu(\{0\})=0, \ \ \ \int_{\R^d_0} (1 \wedge |z|^2) \nu(\dif z)<\infty;
\Ees
\begin{equation} \label{e:StaDef}
L_t=\int_0^t \int_{|z| \le 1} z \tl N(\dif z, \dif s)+\int_0^t \int_{|z|>1} z N(\dif z, \dif s);
\end{equation}
where $\tl N(\dif z,\dif s)=N(\dif z, \dif s)-\nu(\dif z) \dif s$. It is well known that the random measure $N$ can be defined by:
 for all $A \in \mcl B(\R^d_0)$
\Bes
N(A \times [0,t])=\sum_{0 \le s \le t} \sharp \{L_s-L_{s-}: L_s-L_{s-} \in A\}.
\Ees
Moreover, $N(A \times [0,t])$ satisfies a Poisson distribution with the intensity $\nu(A) t$, more precisely,
$$\PP\left(N(A \times [0,t])=k\right)=\frac{(\nu(A) t)^k}{k!} \re^{-\nu(A) t} \ \ \ \ \ \ k=0,1,2,....$$
We shall use this easy relation frequently in the proof of our crucial Lemma \ref{l:MEigEst} below.
\vskip 3mm

Throughout this paper we assume that
\begin{itemize}
\item[{\bf (H1)}] $\nu$ has a density function $\rho \in C^1(\R^d_0,\R^+)$ and
there exists some $\alpha \in (0,2)$ such that
$$\rho(z)=\frac{\vartheta(z)}{|z|^{d+\alpha}} \ \ \ \forall z \in B_1 \setminus \{0\},$$
where $B_1 \setminus \{0\}=\{z \in \R^d_0: |z|<1\}$ and $\vartheta: B_1 \setminus \{0\}\rightarrow \R^+$ is a $C^1$ bounded function such that for all $z \in B_0 \setminus \{0\}$
$$c \le |\nabla \vartheta(z)| \le C, \ \ \ c \le  \vartheta(z) \le C \ \ {\rm with \ some \  constants} \ C>c>0.$$
\item[{\bf (H2)}]  $a \in C^\infty_b(\R^d,\R^d)$ is a nonzero smooth function whose all derivatives are bounded.
\item[{\bf (H3)}]  $B \in \R^{d \times d}$ is a constant matrix and $B_i$ is the $i$-th column vector of $B$ ($i=1,...,d$).
\end{itemize}
Our main results are the following two theorems.
\begin{thm} \label{t:MaiThm1}
Let $(H1)-(H3)$ all hold. Assume that there exists a nonzero matrix $A \in \R^{d \times d}$ such that
$$a(x)=Ax \ \ \ \ \forall x  \in \R^d.$$
Further assume that there exists some $n \in \N$ such that
$${\rm rank}[B,AB,...,A^nB]=d.$$
Then, for all $t>0$ the transition probability $P_t(x,.)$ associated to the solution of Eq. \eqref{e:SDE} $X_t(x)$ has a smooth density function.
\end{thm}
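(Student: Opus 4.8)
The plan is to run the Bismut--Malliavin program for jump SDEs and to reduce the whole statement to a single eigenvalue estimate for the simplified Malliavin matrix. Since $a(x)=Ax$, the solution of \eqref{e:SDE} is explicit,
\begin{equation*}
X_t=\re^{tA}x+\int_0^t\re^{(t-s)A}B\,\dif L_s ,
\end{equation*}
so the ``derivative'' of $X_t$ in the amplitude of a jump of $L$ occurring at time $s$ is the \emph{deterministic} matrix $\re^{(t-s)A}B$. Perturbing radially the amplitudes of the small jumps and integrating by parts (this is where $(H1)$ enters, via the control of $\nabla\log\rho$ on $B_1\setminus\{0\}$) produces a simplified Malliavin matrix which, for the linear equation, takes up to a strictly positive weight $w$ the form
\begin{equation*}
\mathcal M_t=\int_0^t\!\!\int_{|z|\le 1}\bigl(\re^{(t-s)A}Bz\bigr)\bigl(\re^{(t-s)A}Bz\bigr)^{T}w(z)\,N(\dif z,\dif s) ,
\end{equation*}
with $w$ having the integrability against $\nu$ required for the integration by parts to be legitimate. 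The Malliavin smoothness $X_t\in\bigcap_{p}\mathbb D^{\infty,p}$ is immediate (the map from the jump amplitudes to $X_t$ is linear), so by the usual integration-by-parts criterion it suffices to prove $\|\mathcal M_t^{-1}\|\in\bigcap_{q}L^{q}(\PP)$; this gives $X_t$, and hence $P_t(x,\cdot)$, a $C^\infty$ density for every $t>0$. This last point is precisely the crucial eigenvalue estimate (Lemma~\ref{l:MEigEst}), whose argument I now sketch.

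For a unit vector $\eta\in\R^d$ put $g_\eta(s):=B^{T}\re^{(t-s)A^{T}}\eta$, so that
\begin{equation*}
\scal{\mathcal M_t\eta,\eta}=\int_0^t\!\!\int_{|z|\le 1}\scal{g_\eta(s),z}^{2}w(z)\,N(\dif z,\dif s).
\end{equation*}
The map $g_\eta$ is smooth, with Taylor jet at $s=t$ given by $g_\eta^{(k)}(t)=(-1)^{k}(A^{k}B)^{T}\eta$ and with $\|g_\eta^{(k)}\|_{\infty,[0,t]}\le\|B\|\,\|A\|^{k}\re^{t\|A\|}$. The hypothesis $\mathrm{rank}[B,AB,\dots,A^{n}B]=d$ says exactly that no nonzero $\eta$ annihilates all of $B^{T}\eta,\dots,(A^{n}B)^{T}\eta$, so by compactness of the unit sphere there is $c_0>0$ with $\max_{0\le k\le n}|(A^{k}B)^{T}\eta|\ge c_0$ for every unit $\eta$. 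Feeding this lower bound on the jet of $g_\eta$ at $s=t$, together with the derivative bounds above, into the elementary coercive inequality --- the ``undergraduate ODE'' step that replaces the iterated Norris lemma --- I would obtain constants $\delta_0,\ell_0>0$ depending only on $A,B,n,t$, and for each unit $\eta$ an interval $I_\eta\subseteq[t/2,t]$ of length $\ge\ell_0$ on which $|g_\eta(s)|\ge\delta_0$. (In the iterative packaging one instead applies the inequality $n$ times, peeling off one power of $A$ each time; for the linear equation the jet argument does this in a single stroke, and it is the only place the Kalman condition is used.)

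Next comes the probabilistic part. Restrict to the jumps $(s,z)$ of $L$ with $s\in I_\eta$ and $|z|\in[r,2r]$, where $r=r(\e)$ is a small parameter. By $(H1)$ their number $M$ is Poisson with mean $\asymp\ell_0\,r^{-\alpha}$, and, conditionally on $M$, their marks are i.i.d.\ with a density bounded above and below on the shell $\{r\le|z|\le 2r\}$, so the directions $z/|z|$ are comparable to uniform on $S^{d-1}$; on this event
\begin{equation*}
\scal{\mathcal M_t\eta,\eta}\ \ge\ c(r)\,\delta_0^{2}\sum_{i=1}^{M}\Bigl\langle\tfrac{g_\eta(s_i)}{|g_\eta(s_i)|},\tfrac{z_i}{|z_i|}\Bigr\rangle^{2},\qquad c(r):=r^{2}\inf_{r\le|z|\le 2r}w(z)>0 .
\end{equation*}
Since $\PP\bigl(\scal{\hat u,z/|z|}^{2}<\tau^{2}\bigr)\le C\tau$ uniformly over unit vectors $\hat u$, an exponential Markov bound in the independent summands together with the identity $\E[2^{-M}]=\re^{-\mu/2}$ for Poisson$(\mu)$ gives, for a suitable choice $r=r(\e)\to0$,
\begin{equation*}
\PP\bigl(\scal{\mathcal M_t\eta,\eta}\le\e\bigr)\ \le\ C\exp\!\bigl(-c\,\e^{-\alpha/2}\bigr)\ \le\ C_p\,\e^{p}\qquad\text{for every }p\ge1 .
\end{equation*}
A net argument over $\{|\eta|=1\}$ --- using that $\eta\mapsto\scal{\mathcal M_t\eta,\eta}$ is $2\|\mathcal M_t\|$-Lipschitz and that $\mathcal M_t$ has moments of every order --- then upgrades this to $\PP\bigl(\lambda_{\min}(\mathcal M_t)\le\e\bigr)\le C_p\,\e^{p}$ for all $p$, hence $\|\mathcal M_t^{-1}\|\in\bigcap_q L^{q}(\PP)$, which finishes the proof.

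The part I expect to be the real work is the eigenvalue estimate itself, in its two ingredients: (i) stating the coercive inequality sharply enough to turn the Kalman condition into a \emph{quantitative, uniform} lower bound $|g_\eta|\ge\delta_0$ on an interval of definite length, with all constants depending only on $A,B,n,t$; and (ii) matching the Poisson bookkeeping --- the number of moderate-size jumps against the probability that their directions are simultaneously almost orthogonal to $g_\eta(\cdot)$ --- so that, after the choice of $r(\e)$, the small-ball probability beats every power of $\e$. The remaining points --- validity of the Bismut integration by parts, integrability of the weight $w$ against the singular intensity $\rho(z)\asymp|z|^{-d-\alpha}$, and the $L^q$ bounds on $\|\mathcal M_t\|$ used in the net argument --- should be routine under $(H1)$--$(H3)$.
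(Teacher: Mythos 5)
Your proposal is correct in outline and would prove the theorem, but it takes a genuinely different route from the paper in both of the key technical steps. First, the Malliavin covariance: the paper perturbs the jumps in the fixed, time-dependent directions $v_i(z,s)=h(z)B^*K_s^*\re_i$ with the scalar weight $h(z)=|z|^4$ near the origin, so its simplified matrix is $\mcl M_t=\int_0^t\int K_sBB^*K_s^*h(z)N(\dif z,\dif s)$ and the jump \emph{directions} never enter the quadratic form; consequently the probabilistic part of the eigenvalue estimate collapses to ``at least one jump of size $\ge\e^{\ell}$ occurs in a short deterministic time interval'', whose failure probability is exactly $\re^{-\delta\nu(\e^\ell\le|z|\le1)}$ --- no directional anti-concentration, no conditional i.i.d.\ marks, no Chernoff bound. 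Your radial-perturbation covariance $\int(\re^{(t-s)A}Bz)(\re^{(t-s)A}Bz)^{T}w(z)\,N(\dif z,\dif s)$ is the Bichteler--Gravereaux--Jacod/Picard-type object; it does work under (H1) because $\vartheta$ is bounded below (so the angular part of $\nu$ is non-degenerate), but it forces the extra layer of small-ball estimates for $\langle\hat u,z/|z|\rangle$ and large-deviation bookkeeping over the Poisson number of shell jumps, and the integration-by-parts machinery for it is not the one built in this paper (Lemmas 3.1--3.3 are proved for perturbations of the special form $h(z)\xi(s)$ with adapted $\xi$), so you would have to import or redo the smoothness criterion and the higher-order moment bounds from the cited literature. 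Second, the deterministic step: where you use a one-shot jet argument (the Kalman condition gives $\max_{0\le k\le n}|(A^{k}B)^{T}\eta|\ge c_0$ at $s=t$, then polynomial anti-concentration plus the Taylor remainder give $|g_\eta|\ge\delta_0$ on an interval of fixed length $\ell_0$), the paper instead iterates its coercive inequality (Lemma 4.1) along the chain $B_{i_0},AB_{i_0},\dots,A^{l}B_{i_0}$ with $\e_0$-dependent logarithmic thresholds $|\log\e_0|^{-\gamma(4n)^{-j}}$, obtaining a lower bound of size $|\log\e_0|^{-c\gamma}$ on a time interval whose length shrinks logarithmically as $\e_0\to0$. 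Your version is cleaner and yields an $\e$-independent interval and constant, though it is only sketched and the quantitative statement still needs a proof (e.g.\ via a Markov/Remez bound on the degree-$n$ Taylor polynomial); what the paper's heavier iteration buys is that it matches its own very light probabilistic step, keeping the whole argument self-contained and elementary. The remaining architecture --- the net over the unit sphere controlled by moments of $\mcl M_t$, the passage from the tail bound on $\lambda_{\min}$ to $\E|\mcl M_t^{-1}|^p<\infty$, and the reduction of smoothness of the density to these inverse moments --- coincides with the paper's.
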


\begin{thm} \label{t:MaiThm2}
Let $(H1)-(H3)$ all hold.
Assume that the following uniform H\"{o}rmander condition holds:
$$\inf_{x \in \R^d} \inf_{|u|=1} \sum_{i=1}^d \left(|\Ll\nabla  a(x) B_i,u\Rr|^2+|\Ll B_i,u\Rr|^2\right)>0.$$
Then, for all $t>0$ the transition probability $P_t(x,.)$ associated to the solution of Eq. \eqref{e:SDE} $X_t(x)$ has a smooth density function.
\end{thm}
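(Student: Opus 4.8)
The strategy is to run Bismut's integration-by-parts formula for~\eqref{e:SDE}, read off the simplified Malliavin matrix, and reduce the theorem to the lower bound on its smallest eigenvalue stated in Lemma~\ref{l:MEigEst}. Because $B$ is constant, the jump of $X$ at a jump time $s$ of $L$ equals $B\,\Delta L_s$ and does not depend on $X_{s-}$; hence the Jacobian flow $J_{s\to t}:=\nabla_x X_t\,(\nabla_x X_s)^{-1}$ carries no jump part and solves the \emph{linear} ordinary differential equation $\tfrac{\dif}{\dif s}J_{s\to t}=-J_{s\to t}\nabla a(X_s)$, $J_{t\to t}=\Id$, so by Gronwall and (H2),
\begin{equation*}
\norm{J_{s\to t}}\le\re^{C(t-s)},\qquad \norm{J_{s\to t}-\Id}\le C(t-s)\,\re^{C(t-s)},\qquad 0\le s\le t,
\end{equation*}
with $C$ depending only on $\norm{\nabla a}_{\infty}$. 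Perturbing the jump positions of $L$ along suitable cutoff vector fields supported near the origin (the precise shape chosen, via the form of $\rho$ in (H1), so that the divergence weight stays bounded), Bismut's formula yields the simplified Malliavin matrix, schematically $\M_t=\sum_{0<s\le t}\phi(\Delta L_s)\,J_{s\to t}BB^{*}J_{s\to t}^{*}$ with $\phi(z)\asymp|z|^{2}$ near $0$. Since under (H2)--(H3) $X_t$ and its Malliavin derivatives have moments of all orders, the standard criterion of Malliavin calculus for jump diffusions reduces the theorem to $\E\bigl[(\inf_{|u|=1}\Ll\M_t u,u\Rr)^{-p}\bigr]<\infty$ for all $p\ge1$, hence to the small-ball bound $\PP\bigl(\inf_{|u|=1}\Ll\M_t u,u\Rr\le\e\bigr)\le C_N\,\e^{N}$ for every $N\in\N$.

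For the eigenvalue estimate I would fix a unit vector $u$, set $c_0:=B^{*}u$, $c_1:=B^{*}\nabla a(X_t)^{*}u$, and note that the uniform H\"ormander condition is exactly $|c_0|^{2}+|c_1|^{2}=\sum_{i=1}^{d}\bigl(|\Ll B_i,u\Rr|^{2}+|\Ll\nabla a(X_t)B_i,u\Rr|^{2}\bigr)\ge c_{\mathrm H}$ for a deterministic $c_{\mathrm H}>0$. With $Y_s:=B^{*}J_{s\to t}^{*}u$ the above ODE gives $Y_t=c_0$, $\dot Y_t=-c_1$, $|\dot Y_s|\le M$ (a deterministic constant), and $\Ll\M_t u,u\Rr=\sum_{s}\phi(\Delta L_s)|Y_s|^{2}$. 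If $|c_0|$ is bounded below it suffices to find one small jump close to $t$, since then $|Y_s|\ge|c_0|/2$ on a fixed window; the substance is the regime where the coercivity must come from the \emph{first bracket} $c_1$. There I would keep in $\Ll\M_t u,u\Rr$ only jumps with time in a short window $[t-\delta,t]$ and size in an annulus $\{\lambda\le|z|\le2\lambda\}$, on which $\phi\asymp\lambda^{2}$, Taylor-expand $Y_{s}=c_0+(t-s)c_1+r_s$ from the ODE, and apply the elementary coercive inequality (the replacement for Norris' lemma): an affine $\R^{d}$-valued map is reconstructed, with $O(\delta^{-1})$ loss, from two $\delta$-separated samples, so from two retained jumps at times $t_1,t_2$ with $t-t_i\asymp\delta$ and $|t_1-t_2|\gtrsim\delta$,
\begin{equation*}
\phi(\Delta L_{t_1})|Y_{t_1}|^{2}+\phi(\Delta L_{t_2})|Y_{t_2}|^{2}\;\gtrsim\;\lambda^{2}\Bigl(\kappa\bigl(|c_0|^{2}+\delta^{2}|c_1|^{2}\bigr)-C\bigl(|r_{t_1}|^{2}+|r_{t_2}|^{2}\bigr)\Bigr)\;\ge\;\lambda^{2}\bigl(\kappa\,\delta^{2}c_{\mathrm H}-C(|r_{t_1}|^{2}+|r_{t_2}|^{2})\bigr),
\end{equation*}
using $|c_0|^{2}+\delta^{2}|c_1|^{2}\ge\delta^{2}(|c_0|^{2}+|c_1|^{2})\ge\delta^{2}c_{\mathrm H}$ for $\delta\le1$.

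It then remains to check that the remainders $r_{t_i}$ are of order strictly smaller than $\delta\sqrt{c_{\mathrm H}}$ — which holds once $\delta$ is small and the path of $X$ has not fluctuated atypically on $[t-\delta,t]$ — so that on a good event $\inf_{|u|=1}\Ll\M_t u,u\Rr\ge c\,\lambda^{2}\delta^{2}c_{\mathrm H}$, uniformly in $u$, and to count the jumps: by (H1) and the explicit Poisson law recorded after~\eqref{e:StaDef}, the number of jumps in $[t-\delta,t]$ with size in $[\lambda,2\lambda]$ is Poisson with mean $\asymp\delta\lambda^{-\alpha}$ (here one uses $c\le\vartheta\le C$), so the required pair exists except on an event of probability $\le\re^{-c\delta\lambda^{-\alpha}}$. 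The bad events — absence of that pair, or atypical fluctuation of $X$ over the window — are $u$-independent and are controlled by Poisson tail bounds and crude moment estimates, and a joint choice of $\delta=\delta(\e)$ and $\lambda=\lambda(\e)$ tending to $0$ as appropriate powers of $\e$, arranged so that $\kappa\lambda^{2}\delta^{2}c_{\mathrm H}\gtrsim\e$ while $\delta\lambda^{-\alpha}$ stays large, then yields $\PP(\inf_{|u|=1}\Ll\M_t u,u\Rr\le\e)\le C_N\e^{N}$, which by the Malliavin criterion proves that $P_t(x,\cdot)$ has a smooth density.

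I expect the main obstacle to be precisely this quantitative eigenvalue bound: the delicate interplay of the three scales — the window length $\delta$, the jump scale $\lambda$, and the size of the Taylor remainder $r_s$ along the (jumping) path of $X$ — so that $[t-\delta,t]$ contains enough well-separated small jumps while the affine approximation of $s\mapsto B^{*}J_{s\to t}^{*}u$ remains accurate, uniformly over $|u|=1$; this bookkeeping is the heart of Lemma~\ref{l:MEigEst}. It is also why the inequality can be applied only \emph{once}, so that only the first-order brackets $\nabla a(x)B_i$ are reached: iterating towards higher brackets would require a Norris-type lemma for the jump noise, which is unavailable. In the linear case of Theorem~\ref{t:MaiThm1} the flow $J_{s\to t}=\re^{(t-s)A}$ is explicit, and it is exactly this explicitness that permits an iteration upgrading the first bracket to the full Kalman condition $\rank[B,AB,\dots,A^{n}B]=d$.
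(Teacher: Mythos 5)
Your overall architecture (Bismut integration by parts, the simplified Malliavin matrix $\mcl M_t$, reduction to a small-ball estimate for $\lambda_{\min}(t)$, moments of all derivatives) matches the paper, and your deterministic two-point ``affine reconstruction'' inequality is a legitimate substitute for Norris' lemma. But there is a genuine gap at the heart of your eigenvalue estimate. Your bracket case expands $Y_s=B^*J_{s\to t}^*u$ around time $t$ with slope $c_1=B^*\nabla a(X_t)^*u$, and the remainder is $r_s=\int_s^t B^*\bigl(\nabla a(X_r)^*J_{r\to t}^*-\nabla a(X_t)^*\bigr)u\,\dif r$. To make $|r_{t_i}|$ small compared with $\delta\sqrt{c_{\mathrm H}}$ you must keep $\nabla a(X_r)$ close to $\nabla a(X_t)$ on the whole window $[t-\delta,t]$, i.e.\ you must exclude the event that $X$ (equivalently $L$) makes an $O(1)$ jump there. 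Under (H1) the big-jump part of $\nu$ is unrestricted, so that event has probability $\asymp\delta\,\nu(|z|\ge\eta_0)$: exactly one power of $\delta$, no matter how you estimate it (and ``crude moment estimates'' are not even available, since $L$ need not have high moments). Since your scheme forces $\delta=\delta(\e)\to0$ as a power of $\e$ (with $\delta$ fixed the fluctuation event has probability bounded below), the best you can get is $\PP(\lambda_{\min}(t)\le\e)\le C\e^{c_0}$ for one fixed $c_0$, not $C_N\e^N$ for every $N$. That only yields finitely many moments of $\mcl M_t^{-1}$, hence finitely many derivatives through the criterion of Lemma \ref{l:SmoCri}, and the claimed \emph{smooth} density does not follow. (A secondary point: with the weight $\phi(z)\asymp|z|^2$ the divergence weight in \eqref{e:SkoInt} behaves like $|z|$, which is not $\nu$-integrable for $\alpha\ge1$; the paper deliberately takes $h(z)=\varphi(z)|z|^4$ so that the weight $\lesssim|z|^3$ and all exponential moments in \eqref{e:ExpDelV} come for free.)

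The paper avoids this obstruction precisely by exploiting the \emph{uniformity in $x$} of the H\"ormander condition, which you never use quantitatively. In its Case 2, $|\Ll\nabla a(x)B_{i_1},u\Rr|\ge\kappa_0$ for \emph{all} $x$ with a sign constant in $x$, and $\|K_s-I\|\le s\|\nabla a\|_\infty\re^{\|\nabla a\|_\infty s}$ deterministically; hence $\Ll K_s\nabla a(X_s)B_{i_1},u\Rr$ keeps magnitude $\ge\kappa_0/2$ and constant sign on a short deterministic interval \emph{pathwise}, no matter how wildly $X$ jumps. Plugging this into $K_tB_{i_1}=B_{i_1}-\int_0^tK_s\nabla a(X_s)B_{i_1}\dif s$ gives $|\Ll K_tB_{i_1},u\Rr|\ge\kappa_0 t/2-|\log\e_0|^{-\gamma}$ on $[\tl\delta/2,\tl\delta]$ with no conditioning on the path, so the only bad event left is ``no jump of size $\ge\e^{\ell}$ in that window'', whose probability $\re^{-c|\log\e|^{-\gamma}\nu(\e^{\ell}\le|z|\le1)}$ is superpolynomially small; a single jump suffices and no two-jump separation or fluctuation control is needed. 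If you want to keep your endpoint-Taylor formulation, you would have to replace the fluctuation event by this pathwise-deterministic sign/coercivity argument (or localize after the last large jump, which is a substantially different proof); as written, the step ``bad events \dots controlled by Poisson tail bounds and crude moment estimates'' is where the argument fails.
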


Comparing with \cite{PrZa09}, our assumption in (H1) is more strict than the one therein:
$$\inf_{|h|=1} \int_{|\Ll z,h\Rr| \le r} |\Ll z,h \Rr| \nu(\dif z) \ge r^{2-\alpha} \ \ \ {\rm for \ some \ sufficiently \ small} \ r>0.$$
Because the Skorohod integral \eqref{e:SkoInt} below includes some gradient, it seems the differentiability assumption in (H1) is needed. Our second theorem
seems to be completely new comparing with the known results. We shall denote 
$$|B|=\max_{1 \le i \le d}|B_i|.$$
\ \

\section{Integration by parts formula and simplified Malliavin matrix for jump L\'evy processes}
Denote the solution of Eq. \eqref{e:SDE} by $(X_t(x,L))_{t \ge 0}$, it is a functional of $x$ and $L$.
For any $\xi \in \R^d$ it is well known that the derivative
$\nabla_\xi X_t$ satisfies
$$\dif \nabla_\xi X_t=\nabla a(X_t) \nabla_\xi X_t \dif t, \ \ \ \nabla_\xi X_0=\xi.$$
There exists a Jacobi flow $J_{t}$ associated to Eq. \eqref{e:SDE} such that
\Be \label{e:JtEqn}
\dif J_{t}=\nabla a(X_t) J_{t}\dif t, \ \ \  J_{0}=I.
\Ee
Clearly we have
$$\nabla_\xi X_t=J_t \xi.$$
For every $t\ge 0$, $J_t$ has an inverse. We denote $K_t=J^{-1}_t$ for each $t \ge 0$ and $K_t$ satisfies
\Be \label{e:KtEqn}
\dif K_t=-K_t \nabla a(X_t) \dif t, \ \ \ K_0=I.
\Ee
\vskip 3mm
Denote $\Omega=D(\R^+,\R^d)$ the collection of function
$\omega: \R^+ \rightarrow \R^d$ which is right continuous and has left limit. In our situation, it is convenient for us to take
$\Omega=D(\R^+,\R^d)$. Let $(\mcl F_t)_{t \ge 0}$ be the canonical filtration of $\Omega$ and $\mcl P$ be the predictable
$\sigma$-field on $\R^+ \times \Omega$.
Let $v:\R^d_0 \times \R^+ \times \Omega  \rightarrow \R$ be a $\mcl B(\R^d_0) \times \mcl P$-measurable function such that
$$\E\int_0^t \int_{\R^d_0} |v(z,s)| \nu(\dif z) \dif s<\infty \ \ \ \ \forall \ t>0.$$
Define
$$V(t)=\int_0^t \int_{\R^d_0} v(z,s) N(\dif z, \dif s),$$
and
\Bes
D_V X_t=\lim_{\e \rightarrow 0}\frac{X_t(x,L+\e V)-X_t(x,L)}{\e},
\Ees
the above limit exists in $L^1((\Omega,\mcl F,\PP);\R^d)$ for each $t \ge 0$ (\cite{BaCl11}). The $D_V X_t$ satisfies
\Bes
\dif D_V X_t=\nabla a(X_t) D_V X_t \dif t+B \dif V_t, \ \ \ D_V X_0=0,
\Ees
which is solved by
\Be \label{e:DVXt}
D_V X_t=J_t \int_0^t \int_{\R^d_0} K_s B v(z,s) N(\dif z, \dif s).
\Ee

\begin{lem}
Let $\xi(t)$ be an adapted process valued on $\R^d$ such that there exist some $C_1, C_2>0$ such that
$$\sup_{\omega \in \Omega} |\xi(t,\omega)| \le C_2 \re^{C_1 t} \ \ \ \forall \ t \ge 0.$$
Let
\Be \label{e:SpeH}
h(z)=\varphi(z) |z|^{4}
\Ee
where $\varphi:\R^d \rightarrow \R^+$ is a smooth function such that $h(z)=1$ for $|z|  \le  1$ and $h(z)=0$ for $|z| \ge 2$. Take $v(z,t)=h(z) \xi(t)$ and $V(z,t)=\int_0^t v(z,s) \dif s$, then,
for all $f \in C_b^1(\R^d)$ the following relation holds:
\Be \label{e:IBPGen}
\E\left(D_V f(X_t)\right)=\E\left(f(X_t) \delta(V)\right) \ \ \ \ \ \forall t \in [0,T],
\Ee
where
\Be \label{e:SkoInt}
\delta(V)=\int_0^t \int_{\R^d_0} \frac{{\rm div}(\rho(z) h(z) \xi(s))}{\rho(z)} \tl N(\dif z, \dif s)
\Ee
 Moreover, for all $\lambda>0$ we have
\Be \label{e:ExpDelV}
\E \re^{\lambda |\delta(V)|}<C,
\Ee
\Be \label{e:CPP1}
\E \re^{\lambda \int_0^t \int_{\R^d_0} h(z) N(\dif z, \dif s)} \le C,
\Ee
where $C$ depends on $\lambda, \xi$ and $t$.
\end{lem}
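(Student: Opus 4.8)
\emph{Strategy.} The plan is to establish the integration by parts formula \eqref{e:IBPGen} by Bismut's method: realize the shift $L\mapsto L+\epsilon V$ as a predictable transformation of the jump sizes of the Poisson random measure $N$, identify the Radon--Nikodym density of the shifted law by the Girsanov theorem for Poisson random measures, and differentiate at $\epsilon=0$. Fix $T>0$ and work on $[0,T]$. First I would verify that there is $\epsilon_0>0$, depending only on $T,h,C_1,C_2$, such that for $|\epsilon|\le\epsilon_0$ the map $z\mapsto\Theta^\epsilon_s(z):=z+\epsilon v(z,s)=z+\epsilon h(z)\xi(s)$ is, for every $\omega\in\Omega$ and every $s\le T$, a $C^1$ diffeomorphism of $\R^d_0$ onto itself equal to the identity outside $\{|z|\le 2\}$. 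Non-degeneracy of $I+\epsilon\nabla_z v(z,s)$ is immediate since $\nabla_z v$ is bounded uniformly on $\Omega\times[0,T]$ by \eqref{e:SpeH} and the growth bound on $\xi$; and $\Theta^\epsilon_s(z)\neq 0$ for $z\neq 0$ because $h(z)=\varphi(z)|z|^4$ vanishes superlinearly at the origin, so $|\epsilon h(z)\xi(s)|<|z|$ for $|z|$ small, while on $\{\delta\le|z|\le 2\}$ one shrinks $\epsilon_0$. By the change of variables/Girsanov formula for Poisson random measures under predictable transformations (cf.\ \cite{BaCl11} and the references there) there is then a density $\mathcal G^\epsilon_t>0$, $\mathcal G^0_t\equiv 1$, such that for every bounded measurable $\Phi$,
\[
\E\big[\Phi\big(X_\cdot(x,L+\epsilon V)\big)\big]=\E\big[\Phi\big(X_\cdot(x,L)\big)\,\mathcal G^\epsilon_t\big],
\]
\[
\mathcal G^\epsilon_t=\exp\!\Big(\int_0^t\!\!\int_{\R^d_0}\log\Lambda^\epsilon\;N(\dif z,\dif s)-\int_0^t\!\!\int_{\R^d_0}(\Lambda^\epsilon-1)\,\nu(\dif z)\,\dif s\Big),
\]
where $\Lambda^\epsilon(z,s)=\rho(z+\epsilon v(z,s))\,|\det(I+\epsilon\nabla_z v(z,s))|/\rho(z)$.

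\emph{Differentiation at $\epsilon=0$.} Taking $\Phi=f\in C_b^1(\R^d)$ and differentiating both sides at $\epsilon=0$: on the left, $\partial_\epsilon|_{\epsilon=0}X_t(x,L+\epsilon V)=D_V X_t$ (the limit in $L^1$ by \cite{BaCl11}, given by \eqref{e:DVXt}), so the left side equals $\E[\nabla f(X_t)\cdot D_V X_t]=\E[D_V f(X_t)]$. On the right, the elementary identity $\frac{\nabla\rho\cdot v}{\rho}+{\rm div}_z v=\frac{{\rm div}_z(\rho v)}{\rho}$ gives $\partial_\epsilon|_{\epsilon=0}\log\Lambda^\epsilon(z,s)=\partial_\epsilon|_{\epsilon=0}(\Lambda^\epsilon(z,s)-1)=\frac{{\rm div}_z(\rho(z)v(z,s))}{\rho(z)}$, whence
\[
\partial_\epsilon\big|_{\epsilon=0}\mathcal G^\epsilon_t=\int_0^t\!\!\int_{\R^d_0}\frac{{\rm div}(\rho(z)h(z)\xi(s))}{\rho(z)}\,\tl N(\dif z,\dif s)=\delta(V),
\]
which is \eqref{e:SkoInt}. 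The delicate point is interchanging $\partial_\epsilon$ and $\E$ on both sides, which requires dominating $\mathcal G^\epsilon_t$ and $\partial_\epsilon\mathcal G^\epsilon_t$ uniformly in $|\epsilon|\le\epsilon_0$. Here \eqref{e:SpeH} and (H1) enter crucially: the factor $|z|^4$ in $h$ is precisely what cancels the $|z|^{-d-\alpha}$ blow-up of $\rho$ in ${\rm div}(\rho h\xi)/\rho$, so that $G(z,s):={\rm div}(\rho(z)h(z)\xi(s))/\rho(z)$ --- and, for $|\epsilon|\le\epsilon_0$, also $\log\Lambda^\epsilon(z,s)$ and $\partial_\epsilon\log\Lambda^\epsilon(z,s)$ --- are bounded, supported in $\{|z|\le 2\}$, and vanish at the origin, with $|G(z,s)|\le C_2 e^{C_1 t}\Psi(z)$ for $s\le t$, where $\Psi$ is a fixed bounded compactly supported function and $\int_{\R^d_0}\Psi(z)^2\,\nu(\dif z)<\infty$. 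Boundedness and compact support of these integrands make $\mathcal G^\epsilon_\cdot$ a true (not merely local) exponential martingale and furnish the uniform $L^p$ bounds legitimizing the differentiation; together with $f\in C_b^1$ and dominated convergence this proves \eqref{e:IBPGen}.

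\emph{Exponential moments.} These follow from the same estimates. For \eqref{e:ExpDelV}, $\delta(V)=\int_0^t\!\int_{\R^d_0}G\,\tl N(\dif z,\dif s)$ is a compensated Poisson integral of a predictable integrand with $|G(z,s)|\le C_2 e^{C_1 t}\Psi(z)$ and $\int\Psi^2\,\nu(\dif z)<\infty$; for $\theta\in\R$ the Dol\'eans--Dade exponential $M^\theta_r=\exp\!\big(\theta\int_0^r\!\int G\,\tl N-\int_0^r\!\int(e^{\theta G}-1-\theta G)\,\nu(\dif z)\dif s\big)$ has, since $G$ is bounded, a compensator term bounded by some $K(t,\theta)<\infty$, hence is a true martingale, so $\E e^{\theta\int_0^t\int G\,\tl N}\le e^{K(t,\theta)}$; taking $\theta=\pm\lambda$ and using $e^{\lambda|\delta(V)|}\le e^{\lambda\delta(V)}+e^{-\lambda\delta(V)}$ gives \eqref{e:ExpDelV}. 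For \eqref{e:CPP1}, $\int_0^t\!\int_{\R^d_0}h(z)\,N(\dif z,\dif s)$ is an increasing integral with a \emph{deterministic} nonnegative integrand, so by the L\'evy--Khintchine formula $\E\exp\!\big(\lambda\int_0^t\!\int h\,N(\dif z,\dif s)\big)=\exp\!\big(t\int_{\R^d_0}(e^{\lambda h(z)}-1)\,\nu(\dif z)\big)$, finite because $h$ is bounded, supported in $\{|z|\le 2\}$ and $O(|z|^4)$ at $0$, while $\int_{|z|\le 1}|z|^4\,\nu(\dif z)<\infty$ by (H1) and $\alpha<2$; in all cases $C$ depends only on $\lambda,t$ and $C_1,C_2$, i.e.\ on $\lambda,\xi,t$. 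The main obstacle is the integration by parts step --- in particular the Girsanov change of measure for the path-dependent shift $v(z,s)=h(z)\xi(s)$ and the uniform-in-$\epsilon$ domination needed to differentiate under $\E$; the identification of $\delta(V)$ and the exponential estimates are then routine.
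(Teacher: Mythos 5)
Your proposal is correct and follows essentially the same route as the paper's appendix: Bismut's perturbation $L\mapsto L+\e V$ combined with a Girsanov-type change of density for the Poisson random measure, differentiation of the density at $\e=0$ to identify $\delta(V)$ as the compensated integral of ${\rm div}(\rho h\xi)/\rho$, and the bound $|{\rm div}(\rho h \xi(s))/\rho|\le c|z|^3 1_{|z|\le 2}$ (coming from the $|z|^4$ factor in $h$ versus the $|z|^{-d-\alpha}$ singularity of $\rho$) to control the exponential moments. The only differences are cosmetic: you state the Girsanov identity directly as a density for the shifted path (with the Jacobian factor $|\det(I+\e\nabla_z v)|$ made explicit), whereas the paper works through the auxiliary measure $\PP^\e$ and the inverse map $u^\e$; and you prove \eqref{e:ExpDelV}--\eqref{e:CPP1} by hand via Dol\'eans--Dade exponentials and the exponential formula for Poisson integrals, where the paper simply invokes Sato's Theorem 25.3.
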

\begin{proof}
\eqref{e:SpeH} is not new, we shall give a fast sketchy proof in the appendix for the completeness. For more details, one can refer to \cite{Bis82,Bis83,BiGrJa86}. Let us prove \eqref{e:ExpDelV}. It is easy to check that
$$\sup_{0 \le s \le t} \left|\frac{{\rm div}(\rho(z) h(z) \xi (s))}{\rho(z)}\right| \le c |z|^{3} \ \ \ \ \forall |z| \le 2$$
$$\sup_{0 \le s \le t} \left|\frac{{\rm div}(\rho(z) h(z) \xi(s))}{\rho(z)}\right|=0 \ \ \ \ \forall |z| \ge 2$$
where $c$ is some constant depending on $\alpha$ and $\xi$. By \cite[Theorem 25.3]{Sa90}, we immediately get the desired bound \eqref{e:ExpDelV}.
\eqref{e:CPP1} follows from \cite[Theorem 25.3]{Sa90} again.
\end{proof}

Let $\{\re_1,...,\re_d\}$ be the standard basis of $\R^d$, for $i=1,...,d$ define
$$\xi_i(t)=B^{*} K^*_t \re_i, \ \ \ v_i(z,t)=h(z) \xi_i(t),$$
by \eqref{e:DVXt} we have
$$D_{V_i}X_t=J_t \int_0^t \int_{\R^d_0} K_s B B^* K^{*}_s \re_i h(z)N(\dif z, \dif s) \ \ \ \ \forall t>0$$
with $V_i(t)=\int_0^t \int_{\R^d_0} h(z)\xi_i(s) N(\dif z, \dif s)$ for $i=1,...,d$. Therefore,
\Be \label{e:MlvJt}
[D_{V_1}X_t,...,D_{V_d}X_t]=J_t \int_0^t \int_{\R^d_0} K_s B B^* K^{*}_s h(z)N(\dif z, \dif s).
\Ee
Write
$$\mcl M_t=\int_0^t \int_{\R^d_0}  K_s BB^*K^{*}_s h(z) N(\dif z,\dif s),$$
it is called \emph{simplified Malliavin} matrix (\cite{No86}).  $\mcl M_t$ is a symmetric $d \times d$ matrix whose smallest eigenvalue $\lambda_{{\rm min}}(t)$ is
$$\lambda_{{\rm min}}(t)=\inf_{u \in \R^d: |u|=1}\Ll \mcl M_t u, u\Rr.$$
A straightforward computation gives
\Be \label{e:SmaEigRep}
\lambda_{{\rm min}}(t)=\inf_{|u|=1}\int_0^t \int_{\R^d_0} \sum_{i=1}^d |\Ll K_s B_i, u\Rr|^2 h(z) N(\dif z, \dif s).
\Ee
To prove the smoothness of densities, we need the following auxiliary lemmas.
\begin{lem} \label{l:AuxLem}
The following statements hold£º
\begin{itemize}
\item[(1)] We have $|J_t|, |K_t| \le \re^{\|\nabla a\|_\infty t}\ \ \forall t \ge 0$. In particular, $|J_t|, |K_t| \le \re^{|A| t} \ \ \forall t \ge 0$ when the condition in Theorem
\ref{t:MaiThm1} holds.
\item[(2)] Let $V_1,...,V_d$ be as above. For all $p>0, m \ge 1, T>0$ and any $(i_1,...,i_m) \in \{1,...,d\}^m$, we have
\begin{align}
&\E \sup_{0 \le t \le T}|D^m_{V_{i_1},...,V_{i_m}} X_t|^p<\infty, \label{a:DvX} \\
&\E \sup_{0 \le t \le T} |D^m_{V_{i_1},...,V_{i_m}} K_t|^p<\infty, \label{a:DvK} \\
&\E \sup_{0 \le t \le T} |D^m_{V_{i_1},...,V_{i_m}} \mcl M_t|^p<\infty.  \label{a:DvM}
\end{align}
\end{itemize}
\end{lem}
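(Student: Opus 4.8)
The plan is to treat (1) first as an immediate consequence of Grönwall's inequality, then establish the moment bounds in (2) by a bootstrap argument: differentiate the defining equations \eqref{e:JtEqn}, \eqref{e:KtEqn}, \eqref{e:DVXt} in the direction $V$, obtain linear SDEs driven by Poisson noise for the Malliavin derivatives, and close the $L^p$ estimates using the exponential integrability \eqref{e:ExpDelV}--\eqref{e:CPP1} together with the Burkholder--Davis--Gundy inequality for discontinuous martingales.

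For (1): from \eqref{e:JtEqn} we have $|J_t| \le 1 + \int_0^t \|\nabla a\|_\infty |J_s|\,\dif s$, so Grönwall gives $|J_t| \le \re^{\|\nabla a\|_\infty t}$; the same argument applied to \eqref{e:KtEqn} gives $|K_t| \le \re^{\|\nabla a\|_\infty t}$. Under the hypothesis of Theorem \ref{t:MaiThm1} we have $\nabla a(x) = A$, hence $\|\nabla a\|_\infty = |A|$, giving the stated refinement. For (2), I would argue by induction on $m$. The key observation is that $D_{V_i} X_t$ solves a linear (inhomogeneous) equation whose coefficients involve $\nabla a(X_t)$ and whose forcing is the jump term $B\,\dif V_{i,t}$; differentiating once more in a direction $V_j$ produces another linear equation in $D^2_{V_i,V_j}X_t$ whose inhomogeneity is built from already-controlled lower-order derivatives (e.g.\ $\nabla^2 a(X_t)\, D_{V_j}X_t\, D_{V_i}X_t$ and a term involving $D_{V_j}$ applied to the jump coefficient $K_sB$). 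Since $a \in C^\infty_b$ by (H2), all the $\nabla^k a$ appearing are bounded, so each such linear equation has the schematic form $\dif Y_t = \nabla a(X_t) Y_t \,\dif t + (\text{bounded}) \cdot (\text{lower order terms})\,\dif t + (\text{jump forcing})\,\dif N$. Taking $p$-th moments, applying BDG to the martingale part of the jump integral, and using that the integrand $h(z)\xi_i(s)$ is supported in $|z|\le 2$ with $|\xi_i(s)| \le |B|\re^{\|\nabla a\|_\infty s}$ deterministically bounded on $[0,T]$, one reduces everything to finiteness of $\E\sup_{t\le T}\big(\int_0^t\int h(z)N(\dif z,\dif s)\big)^q$ for all $q$, which follows from \eqref{e:CPP1}. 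A Grönwall argument in the time variable then closes the estimate \eqref{a:DvX}; \eqref{a:DvK} follows identically from \eqref{e:KtEqn}; and \eqref{a:DvM} follows since $\mcl M_t$ is an integral of a polynomial expression in $K_s$, $B$ and $h(z)$, whose Malliavin derivatives are polynomials in $K_s$, its derivatives, and the compactly-supported kernel $h$.

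The main obstacle I expect is purely bookkeeping: writing out the equation satisfied by the $m$-th order derivative $D^m_{V_{i_1},\dots,V_{i_m}}X_t$ cleanly enough to see that its inhomogeneous term is a finite sum of products of strictly-lower-order derivatives times bounded functions of $X_t$ times (at most one) jump integral, so that the induction hypothesis applies. There is a subtlety in that differentiating the jump integral $\int_0^t\int K_sBv_i(z,s)N(\dif z,\dif s)$ in the direction $V_j$ hits $K_s$ (producing $D_{V_j}K_s$) but does not touch the measure $N$ itself, since $V$-perturbations act through the trajectory $L$, not through the Poisson measure; keeping this distinction straight is where care is needed. Once the structural form of the equations is established, the $L^p$ bounds are routine via BDG, Grönwall, and the exponential moment bounds already proved.
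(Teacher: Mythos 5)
There is a genuine gap, and it sits exactly at the point you flagged as ``where care is needed'': your claim that differentiating the jump integral $\int_0^t\int K_sB\,v_i(z,s)\,N(\dif z,\dif s)$ in the direction $V_j$ ``does not touch the measure $N$ itself'' is wrong in this framework, and the induction you sketch would therefore be run on the wrong equations. In the Bismut picture used here, $\Omega=D(\R^+,\R^d)$ is the path space of $L$ and $N$ is by definition the jump measure of $L$; the perturbation $L\mapsto L+\e V_j$ shifts each jump $\Delta L_s$ to $\Delta L_s+\e\,h(\Delta L_s)\xi_j(s)$, so the atoms of $N$ move, and any functional whose integrand depends on the jump size $z$ (through $h(z)$) picks up a chain-rule term $\nabla h(z)$. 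This is visible already in the Skorohod integral \eqref{e:SkoInt}, whose divergence involves $\nabla h$. Concretely, the paper's equation for $D^2_{V_iV_j}X_t$ contains, besides the terms you list ($\nabla^2 a(X_t)D_{V_i}X_tD_{V_j}X_t$ and the term where the derivative hits $K_sB$), the jump forcing
\begin{equation*}
\int_{\R^d_0}(BK_t)^{*}\re_i\,\nabla h(z)\,(BK_t)^{*}\re_j\,h(z)\,N(\dif z,\dif t),
\end{equation*}
and likewise $D_{V_i}\mcl M_t$ contains the extra term $\int_0^t\int K_sBB^{*}K_s^{*}\,\nabla h(z)\,h(z)\,B^{*}K_s^{*}\re_i\,N(\dif z,\dif s)$ on top of the two terms produced by differentiating $K_s$. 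Your structural claim (inhomogeneity $=$ bounded functions of $X_t$ times lower-order derivatives, jump coefficient differentiated only through $K_sB$) misses all of these, and for higher $m$ the omission compounds, since repeated derivatives also hit $\nabla h$, producing $\nabla^k h$ terms.

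The good news is that the omission is reparable rather than fatal: because $h(z)=\varphi(z)|z|^4$ is compactly supported with $\int_{\R^d_0}|\nabla h(z)|\,h(z)\,\nu(\dif z)<\infty$, the extra terms are controlled exactly like the ones you kept, via exponential moments of $\int_0^T\int|\nabla h(z)|\,h(z)\,N(\dif z,\dif s)$ (this is \eqref{e:ExpGraHEst}, the analogue of \eqref{e:CPP1}). That is precisely how the paper proceeds; note also that the paper never needs BDG or a compensation step, since $h\ge 0$ and the pathwise bounds $|J_t|,|K_t|\le\re^{\|\nabla a\|_\infty t}$ reduce every supremum in (2) directly to powers of $\int_0^T\int h\,\dif N$ and $\int_0^T\int|\nabla h|\,h\,\dif N$, whose all moments are finite. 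So to fix your proposal: redo the bookkeeping with the perturbation acting on the jump sizes (so every occurrence of $h(z)$ in an integrand generates a $\nabla h(z)\cdot h(z)\xi_j$ term when differentiated), then your induction closes with the same elementary pathwise estimates, and BDG can be dropped. Part (1) is fine and matches the paper.
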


\begin{proof}
It is very easy to get (1) from Eq. \eqref{e:KtEqn} and \eqref{e:JtEqn}. By (1) and \eqref{e:DVXt},
for all $i \in \{1,...,d\}$ we have
\Be \label{e:DVXtEst}
\begin{split}
|D_{V_i} X_t| & \le \int_0^t \int_{\R^d_0} |J_t||K_s| |B|^2\re^{\|\nabla a\|_\infty s} h(z) N(\dif z, \dif s) \\
& \le \re^{3\|\nabla a\|_\infty t}|B|^2 \int_0^t \int_{\R^d_0} h(z) N(\dif z, \dif s),
\end{split}
\Ee
thus,
\Be \label{e:DVXtEst}
\begin{split}
\sup_{0 \le t \le T}|D_{V_i} X_t| & \le \re^{3\|\nabla a\|_\infty T}|B|^2 \int_0^T \int_{\R^d_0} h(z) N(\dif z, \dif s).
\end{split}
\Ee
This, together with \eqref{e:CPP1}, implies
\Be \label{e:ExpDVXt}
\E \re^{\lambda \sup_{0 \le t \le T} |D_{V_i} X_t|}<\infty \ \ \ \ \forall \lambda>0,
\Ee
from which the first inequality in (2) for $m=1$ follows immediately.

A straightforward computation gives
\Bes
\begin{split}
\dif D_{V_i V_j}^2 X_t=\nabla a(X_t) & D_{V_i V_j}^2 X_t \dif t+\nabla^2 a(X_t) D_{V_i}X_t D_{V_j} X_t \dif t \\
&+\int_{\R^d_0} (B K_t)^* \re_i \nabla h(z) (B K_t)^* \re_j h(z) N(\dif z,\dif t) \ \ \ \ \forall (i,j) \in \{1,...,d\}^2
\end{split}
\Ees
with $D_{V_i V_j}^2 X_t=0$, from which it is easy to see
\Bes
\begin{split}
|D_{V_i V_j}^2 X_t|&\le \left|\int_0^t \re^{\int_s^t \nabla a(X_r) \dif r} \nabla^2 a(X_s) D_{V_i}X_s D_{V_j} X_s \dif s\right|\\
&\ \ \ +\left|\int_0^t \re^{\int_s^t \nabla a(X_r) \dif r} \int_{\R^d_0}(B K_s)^* \re_i \nabla h(z) (B K_s)^* \re_j h(z) N(\dif z,\dif s)\right| \\
& \le I_1+I_2,
\end{split}
\Ees
where
\Be
\begin{split}
& I_1(t)=\int_0^t \re^{\|\nabla a\|_\infty (t-s)} \|\nabla^2 a\|_\infty  |D_{V_i}X_s| |D_{V_j} X_s| \dif s, \\
& I_2(t)=\int_0^t \re^{\|\nabla a\|_\infty (t-s)} |B|^2  \re^{2\|A\|_\infty s} \int_{\R^d_0} |\nabla h(z)| h(z) N(\dif z,\dif s).
\end{split}
\Ee
Thanks to \eqref{a:DvX} for $m=1$, for all $p>0$ we have
$$\E \sup_{0 \le t \le T} |I_1(t)|^p \le \re^{\|\nabla a\|_\infty T} \|\nabla^2 a\|_\infty \int_0^T   \left(\E|D_{V_i}X_s|^{2p}\right)^{\frac 12} \left(\E|D_{V_j}X_s|^{2p}\right)^{\frac 12}\dif s<\infty.$$
Observe
\Be \label{e:I2Est}
\sup_{0 \le t \le T} I_2(t) \le \re^{2\|\nabla a\|_\infty T}  |B|^2\int_0^T \int_{\R^d_0}  |\nabla h(z)| h(z) N(\dif z,\dif s),
\Ee
in view of \eqref{e:SpeH}, we have $\int_{\R^d_0}|\nabla h(z)| h(z) \nu(\dif z)<\infty$, thus
\Be \label{e:ExpGraHEst}
\E \re^{\lambda \int_0^t \int_{\R^d_0} |\nabla h(z)|  h(z) N(\dif z, \dif s)}<\infty \ \ \ \ \forall \lambda>0,
\Ee
which, together with \eqref{e:I2Est}, implies
\Be 
\E \re^{\lambda \sup_{0 \le t \le T} |I_2(t)|}<\infty \ \ \ \ \forall \lambda>0.
\Ee
The estimates about $I_1$ and $I_2$ immediately give \eqref{a:DvX} for $m=2$.
By a similar (but more tedious) argument we get \eqref{a:DvX} for $m=3,4....$

For \eqref{a:DvK}, we can prove it by a similar argument as for \eqref{a:DvX}. It remains to prove
\eqref{a:DvM}. An easy computation gives
\Bes
D_{V_i} \mcl M_t=J_1(t)+J_2(t)+J_3(t),
\Ees
where
\Bes
\begin{split}
& J_1(t)=\int_0^t \int_{\R^d_0} D_{V_i} K_s BB^*K^{*}_s h(z) N(\dif z,\dif s), \\
& J_2(t)=\int_0^t \int_{\R^d_0} K_s BB^*(D_{V_i}K_s)^* h(z) N(\dif z,\dif s), \\
& J_3(t)=\int_0^t \int_{\R^d_0} K_s B B^*K^{*}_s \nabla h(z) h(z) B^*K^{*}_s \re_i N(\dif z,\dif s).
\end{split}
\Ees
It is easy to see that for all $t \in (0,T]$
\Bes
\begin{split}
|J_1(t)| &\le \int_0^t \int_{\R^d_0} |D_{V_i} K_s| |B|^2 \re^{\|\nabla a\|_\infty s} h(z) N(\dif z,\dif s) \\
& \le   |B|^2 \re^{\|\nabla a\|_\infty T} \sup_{0 \le t \le T} |D_{V_i} K_t|\int_0^T \int_{\R^d_0} h(z) N(\dif z,\dif s),
\end{split}
\Ees
combining the above inequality with \eqref{a:DvX} and \eqref{e:CPP1}, by H\"{o}lder inequality we immediately get
\Be \label{e:J1}
\E \sup_{0 \le t \le T} |J_1(t)|^p<\infty \ \ \ \ \forall p>0.
\Ee
By the same method, we have
\Be \label{e:J2}
\E \sup_{0 \le t \le T} |J_2(t)|^p<\infty \ \ \ \ \forall p>0.
\Ee
For $J_3$, by a similar argument as above we have for all $t \in (0,T]$
\Bes
\begin{split}
|J_3(t)|
& \le  |B|^3 \re^{3\|\nabla a\|_\infty T}\int_0^T \int_{\R^d_0}|\nabla h(z)| h(z) N(\dif z,\dif s),
\end{split}
\Ees
which, together with \eqref{e:ExpGraHEst}, immediately gives
\Be \label{e:J3}
\E \sup_{0 \le t \le T} |J_3(t)|^p<\infty \ \ \ \ \forall p>0.
\Ee
Collecting the estimates for $J_1, J_2, J_3$, we immediately get \eqref{a:DvM} for $m=1$.
By a similar (but more tedious) argument we get the inequalities in (3) for $m=2,3,....$
\end{proof}

The next lemma is a criterion for the smoothness of the density, which will be used to prove our main results.
\begin{lem} \label{l:SmoCri}
If $\mcl M_t$ is invertible a.s. for all $t>0$ and further satisfies
$$\E |\mcl M^{-1}_t|^p<\infty \ \ \ \ \forall p>0.$$
Then, for all $t>0$ the transition probability $P_t(x,.)$ associated to the solution of Eq. \eqref{e:SDE} $X_t(x)$ has a smooth density function.
\end{lem}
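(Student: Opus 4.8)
The plan is to bootstrap the single integration by parts identity \eqref{e:IBPGen} into a full hierarchy
\Be
\E\big[(\p^\beta f)(X_t)\big]=\E\big[f(X_t)\,H_\beta\big],\qquad f\in C_b^\infty(\R^d),
\Ee
valid for every multi-index $\beta$, where each weight $H_\beta$ is a finite sum of products built from the entries of $\mcl M_t^{-1}$, of $J_t$ and $K_t$, of the Skorohod integrals $\delta(V_i)$, and of iterated Malliavin derivatives $D^m_{V_{i_1},\dots,V_{i_m}}$ of $X_t$, $K_t$ and $\mcl M_t$. Once one checks $H_\beta\in\bigcap_{p\ge1}L^p(\PP)$ for every $\beta$, so that $|\E[(\p^\beta f)(X_t)]|\le\|H_\beta\|_{L^1}\|f\|_\infty$, the conclusion follows from the classical criterion that such bounds force the law of $X_t$ to possess a density in $C^\infty(\R^d)$ (a Fourier-inversion argument; see e.g.\ Nualart or Ikeda--Watanabe).

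For the base case $|\beta|=1$ I would use that, by \eqref{e:MlvJt}, the matrix $\Sigma_t:=[D_{V_1}X_t,\dots,D_{V_d}X_t]=J_t\mcl M_t$ is a.s.\ invertible with $\Sigma_t^{-1}=\mcl M_t^{-1}K_t$, all of whose entries lie in $\bigcap_p L^p$ by the hypothesis and Lemma~\ref{l:AuxLem}(1). The chain rule $D_{V_i}f(X_t)=\Ll\nabla f(X_t),D_{V_i}X_t\Rr$ then gives $\p_k f(X_t)=\sum_{i}(\Sigma_t^{-1})_{ik}\,D_{V_i}f(X_t)$. Next I would use the general duality $\E[D_V F]=\E[F\,\delta(V)]$ underlying \eqref{e:IBPGen}, applied to $F=G\,f(X_t)$ with $G$ Malliavin differentiable and suitably integrable — this is legitimate because the perturbation direction $v_i(z,t)=h(z)\xi_i(t)$ is smooth, so $D_{V_i}$ acts as a derivation — combined with the Leibniz rule $D_{V_i}(Gf(X_t))=(D_{V_i}G)f(X_t)+G\,D_{V_i}f(X_t)$, to get
\Be
\E\big[G\,D_{V_i}f(X_t)\big]=\E\big[f(X_t)\big(G\,\delta(V_i)-D_{V_i}G\big)\big].
\Ee
Taking $G=(\Sigma_t^{-1})_{ik}$ yields $H_k=\sum_i\big((\Sigma_t^{-1})_{ik}\delta(V_i)-D_{V_i}(\Sigma_t^{-1})_{ik}\big)$; its membership in $\bigcap_p L^p$ comes from H\"older's inequality together with the hypothesis on $\mcl M_t^{-1}$, the exponential bound \eqref{e:ExpDelV} for $\delta(V_i)$, and the identity $D_{V_i}\Sigma_t^{-1}=-\Sigma_t^{-1}(D_{V_i}\Sigma_t)\Sigma_t^{-1}$ with $D_{V_i}\Sigma_t=(D_{V_i}J_t)\mcl M_t+J_t D_{V_i}\mcl M_t$ (and $D_{V_i}J_t=-J_t(D_{V_i}K_t)J_t$) controlled via Lemma~\ref{l:AuxLem}(1)--(2).

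To iterate from $|\beta|=m$ to $|\beta|=m+1$, I would apply one further derivation $D_{V_{i_{m+1}}}$ to $H_\beta$; the outcome is a universal polynomial expression in $\mcl M_t^{-1}$, $J_t$, $K_t$, the $\delta(V_i)$, and iterated derivatives $D^j$ (for $j\le m+1$) of $X_t,K_t,\mcl M_t$, of $\mcl M_t^{-1}$, and of $\delta(V_i)$. Here Lemma~\ref{l:AuxLem}(2) supplies all moments of the derivatives of $X_t,K_t,\mcl M_t$; $D^j(\mcl M_t^{-1})$ is again a universal polynomial in $\mcl M_t^{-1}$ and the $D^i\mcl M_t$ with $i\le j$, hence in $\bigcap_p L^p$ by the hypothesis; and $D^j\delta(V_i)$ stays in $\bigcap_p L^p$ by the explicit form \eqref{e:SkoInt} together with the same Poisson-process and exponential-moment estimates used for the earlier lemmas. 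An induction on $|\beta|$ then closes the argument. The main obstacle is exactly this bookkeeping: one must verify rigorously that $D_{V_i}$ obeys the chain and Leibniz rules in the present jump setting (which hinges on the special smooth form of $V_i$) and that differentiating $\mcl M_t^{-1}$ and the Skorohod integrands never leaves $\bigcap_p L^p$ — it is precisely at these points that the hypothesis $\E|\mcl M_t^{-1}|^p<\infty$ for all $p$ is invoked repeatedly.
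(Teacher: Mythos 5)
Your proposal is correct and follows essentially the same route as the paper: rewrite $\nabla f(X_t)=D_Vf(X_t)\,\mcl M_t^{-1}K_t$ via $D_VX_t=J_t\mcl M_t$, apply the duality/Leibniz form of \eqref{e:IBPGen} to products $G\,f(X_t)$, bound the resulting weights by H\"older using the hypothesis on $\mcl M_t^{-1}$, Lemma \ref{l:AuxLem} and \eqref{e:ExpDelV}, and iterate to all orders before invoking the standard criterion $|\E[\nabla^m f(X_t)]|\le C\|f\|_\infty$. The bookkeeping points you flag (extended duality for product functionals, derivatives of $\mcl M_t^{-1}$ and of $\delta(V_i)$) are exactly the steps the paper also uses, mostly implicitly, in its own iteration.
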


\begin{proof}
To prove the smoothness of the density, it suffices to show that for all $f \in C^\infty_b(\R^d)$ we have
\Be \label{e:GraBou}
\bigg|\E\left(\nabla^m_{i_1,...,i_m} f(X_t)\right)\bigg| \le C \|f\|_\infty \ \ \ \ \
\forall m \ge 1 \ \ \ \forall (i_1,...,i_m) \in \{1,...,d\}^m,
\Ee
where $\nabla^m_{i_1,...,i_m} =\frac{\p^m}{\p x_{i_1}...\p x_{i_m}}$ and $C$ depends on $t$ and $(i_1,...,i_m)$.

For the notational simplicity, write
$$V(t)=[V_1(t),...,V_d(t)],\ \ \ \ D_V X_t=[D_{V_1} X_t,..., D_{V_d} X_t],$$
they are both $d \times d$ matrices. It is clear to see from \eqref{e:MlvJt}
$$D_V X_t=J_t \mcl M_t.$$
By the relation $D_V f(X_t)=\nabla f(X_t) D_V X_t$, we get
$$\nabla f(X_t)=D_V f(X_t) \mcl M_t^{-1} K_t$$
and thus
$$\nabla_i f(X_t)=\sum_{j=1}^d D_{V_j} f(X_t) (\mcl M_t^{-1} K_t)_{ji} \ \ \ i=1,...,d.$$
It is easy to see that
\Be \label{e:NabFi}
\E\left(\nabla_i f(X_t)\right)
=\sum_{j=1}^d \left\{\E \left[D_{V_j} \left(f(X_t) (\mcl M_t^{-1} K_t)_{ji}\right)\right]-\E \left[f(X_t) D_{V_j}\left((\mcl M_t^{-1} K_t)_{ji}\right)
\right]\right\}.
\Ee
Using integration by parts \eqref{e:IBPGen} and H\"{o}lder inequality we have
\Be \label{e:GraEst1}
\begin{split}
\left|\E \left[D_{V_j} \left(f(X_t) (\mcl M_t^{-1} K_t)_{ji}\right)\right]\right|
& \le \|f\|_\infty \|K_t\|_\infty \left(\E|\mcl M^{-1}_t|^2\right)^{\frac 12} \left(\E|\delta(V_j)|^2\right)^{\frac 12} \\
\end{split}
\Ee
Moreover, we have
\Bes
\begin{split}
D_{V_j}\left((\mcl M_t^{-1} K_t)_{ji}\right)&=\left(D_{V_j} \mcl M_t^{-1} K_t\right)_{ji}+\left(\mcl M_t^{-1} D_{V_j} K_t\right)_{ji} \\
&=\left(\mcl M_t^{-1} D_{V_j} \mcl M_t\mcl M_t^{-1} K_t\right)_{ji}+\left(\mcl M_t^{-1} D_{V_j} K_t\right)_{ji},
\end{split}
\Ees
this, together with H\"{o}lder inequality, implies
\Be \label{e:GraEst2}
\begin{split}
\left|\E \left[f(X_t) D_{V_j}\left((\mcl M_t^{-1} K_t)_{ji}\right)
\right]\right| & \le \|f\|_\infty \|K_t\|_\infty\left(\E|\mcl M_t^{-1}|^4\right)^{\frac 12}\left(\E |D_{V_j}M_t|^2\right)^{\frac12} \\
&\ \ +\|f\|_\infty \left(\E|\mcl M_t^{-1}|^2\right)^{\frac 12}\left(\E |D_{V_j}K_t|^2\right)^{\frac12}
\end{split}
\Ee
Combining \eqref{e:NabFi}-\eqref{e:GraEst2}, by Lemma \ref{l:AuxLem} and the assumption we have
$$\left|\E\left(\nabla_i f(X_t)\right)\right| \le C \|f\|_\infty \ \ \ \forall i \in \{1,...,d\},$$
where $C$ depends on $t, i$.

A straightforward computation gives
\Be
\begin{split}
\nabla^2 f(X_t)&=\mcl M_t^{-1} K_t \left(D^2_V f(X_t)-\nabla f(X_t) D^2_V X_t\right) \mcl M_t^{-1} K_t \\
&=\mcl M_t^{-1} K_t \left(D^2_V f(X_t)-D_V f(X_t) \mcl M_t^{-1} K_t D^2_V X_t\right) \mcl M_t^{-1} K_t,
\end{split}
\Ee
using integration by parts and H\"{o}lder inequality, by Lemma \ref{l:AuxLem} and Corollary \ref{c:InvM} we
get
$$\left|\E\left(\nabla^2_{ij} f(X_t)\right)\right| \le C \|f\|_\infty \ \ \ \forall (i,j) \in \{1,...,d\}^2,$$
where $C$ depends on $i,j$ and $t$.

Iteratively using the same argument as above, we finally get the desired \eqref{e:GraBou}.
\end{proof}

\section{Proof of Theorem \ref{t:MaiThm1}} \label{s:Lin}
When $a(x):=Ax$ is linear, we have
$$J_t=\re^{At}, \ \  K_t=\re^{-At}.$$
\begin{lem} \label{l:UvUpB}
Let $u, v \in \R^d$ both be nonzero vectors with some $p>0$ such that
\Be
\Ll v,u\Rr \ge p   \ \ ({\rm or} \ \Ll u,v\Rr \le -p).
\Ee
Then there exist some $\theta=\frac{1}{2|u||v||A|}\re^{-|A|}$ and
$$\delta=\left(\theta p\right) \wedge 1$$ such that for all $t  \in (0,\delta)$.
\Be \label{e:UniUpB}
\Ll K_t v,u\Rr\ge p/2   \ \ ({\rm respectively} \ \Ll K_t v,u\Rr \le -p/2).
\Ee
Moreover, for all $v \in \R^d$ the following relation holds: for all $l \ge 1$,
\Be \label{e:KalRel}
K_t v=\sum_{j=0}^{l-1} \frac{(-t)^j}{j!} A^{j} v+(-1)^{l} \int_0^t \int_0^{s_1}...\int_0^{s_{l-1}} K_{s_{l}} A^{l} v
\dif s_{k}...\dif s_1.
\Ee
\end{lem}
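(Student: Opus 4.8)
The plan is to treat the two assertions separately, as they are of quite different flavor. For \eqref{e:UniUpB}, the strategy is a direct Gr\"onwall/Taylor estimate on the flow $K_t = \re^{-At}$. Writing $K_t v = v - \int_0^t A K_s v\,\dif s$ (which is just \eqref{e:KtEqn} in the linear case), I would estimate
\[
\abs{\Ll K_t v, u\Rr - \Ll v,u\Rr} = \abs{\int_0^t \Ll A K_s v, u\Rr\,\dif s} \le \abs{u}\,\abs{A} \int_0^t \abs{K_s v}\,\dif s \le \abs{u}\,\abs{A}\,\abs{v}\,t\,\re^{\abs{A}t},
\]
using $\abs{K_s} \le \re^{\abs{A}s}$ from Lemma \ref{l:AuxLem}(1). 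For $t \in (0,1)$ this is bounded by $\abs{u}\abs{v}\abs{A}\re^{\abs{A}}\,t = \tfrac{t}{2\theta}$. Hence if $t < \delta = (\theta p)\wedge 1$, the deviation is strictly less than $p/2$, so $\Ll K_t v, u\Rr > \Ll v,u\Rr - p/2 \ge p/2$ in the first case; the sign-reversed case is identical with the inequalities flipped. Note the constant $\theta$ in the statement is exactly calibrated so that $\tfrac{1}{2\theta} = \abs{u}\abs{v}\abs{A}\re^{\abs{A}}$, which is where the Gr\"onwall bound lands.

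For the identity \eqref{e:KalRel}, this is just the integral form of Taylor's theorem (the Peano/iterated-integral remainder) applied to $s \mapsto K_s v = \re^{-As}v$, whose $j$-th derivative at $s=0$ is $(-A)^j v$. I would prove it by induction on $l$: the base case $l=1$ is $K_t v = v - \int_0^t K_s A v\,\dif s = v - \int_0^t A K_s v\,\dif s$ (using that $A$ commutes with $\re^{-As}$, or directly from \eqref{e:KtEqn}), and the inductive step substitutes the case-$l$ expansion of $K_{s_l} A^{l} v$ — namely $K_{s_l}(A^l v) = (A^l v) - \int_0^{s_l} K_{s_{l+1}} A^{l+1} v\,\dif s_{l+1}$ — into the remainder term, splitting it into the new explicit term $\tfrac{(-t)^{l}}{l!}A^{l} v$ (after evaluating the iterated integral $\int_0^t\!\int_0^{s_1}\!\cdots\!\int_0^{s_{l-1}} \dif s_l\cdots\dif s_1 = t^l/l!$) plus the case-$(l+1)$ remainder.

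The routine part is the bookkeeping of the iterated integrals and the sign $(-1)^l$; the only mild subtlety is keeping the noncommutativity of matrices under control, which is a non-issue here since everything is a polynomial in the single matrix $A$ acting on the fixed vector $v$, so all the relevant objects commute. I expect no genuine obstacle: the first part is a one-line Gr\"onwall estimate with the constants chosen to make it work, and the second is a standard Taylor-with-integral-remainder induction. The reason the lemma is stated this way is presumably so that \eqref{e:KalRel} can later be combined with the Kalman rank hypothesis ${\rm rank}[B, AB, \dots, A^n B] = d$ — the explicit polynomial part carries the $A^j v$ that span $\R^d$, while \eqref{e:UniUpB} provides the uniform-in-$\omega$ lower bound needed to estimate $\lambda_{\min}(t)$ via \eqref{e:SmaEigRep}.
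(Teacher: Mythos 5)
Your proposal is correct and follows essentially the same route as the paper: the first inequality is obtained from $\frac{\dif}{\dif t}K_t = -K_tA$ together with the bound $|K_s|\le \re^{|A|s}$ on $(0,1)$, with $\theta$ calibrated exactly as you describe, and \eqref{e:KalRel} is obtained by iterating $\frac{\dif}{\dif t}(K_tA^jv)=-K_tA^{j+1}v$, i.e.\ the same Taylor-with-integral-remainder induction you propose. No gaps; your use of $AK_s$ in place of $K_sA$ is harmless here since $K_s=\re^{-As}$ commutes with $A$.
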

\begin{proof}
Differentiating $K_t$ with respect to $t$, we get
$$\frac{\dif K_t}{\dif t}=-K_t A,$$
thus for all $t \in (0,1)$,
\Be
|\Ll K_t v,u\Rr-\Ll v, u\Rr| \le \int_0^t |A| \re^{|A| s}|u||v| \dif s \le t |u||v||A| e^{|A|}.
\Ee
Therefore, we get
\Be
|\Ll K_t u,v\Rr-\Ll u, v\Rr| \le p/2 \ \ \ \ \forall t \in (0,\delta).
\Ee
This immediately implies the first inequality.

For each $j \ge 0$, differentiating $K_t A^{j}v$ with respect to $t$, we obtain
$$\frac{\dif}{\dif t} K_t A^{j} v=-K_t A^{j+1} v.$$
Iteratively applying above equation gives \eqref{e:KalRel}.
\end{proof}
\begin{rem}
The inequality \eqref{e:UniUpB} is a replacement of Norris Lemma in our special situation. Thanks to \eqref{e:KalRel},
we can use this inequality \eqref{e:UniUpB} iteratively.
\end{rem}

Let us now prove the following crucial lemma.
\begin{lem} \label{l:MEigEst}
Assume the conditions in Theorem \ref{t:MaiThm1} hold. For any $\gamma>0$ and $\ell \in (0,1/4)$, there exist some $\e_0>0$
depending on $\gamma$, $\ell$ and some $t_0 \in (0,1)$ depending on $\e_0$
such that $\lim_{\e_0 \rightarrow 0} t_0=0$ and that for all
$\e \in (0,\e_0)$ and $t \ge t_0$ we have
\Be \label{e:MEigEst}
\PP\left(\lambda_{{\rm min}}(t) \le \e\right) \le C\re^{-c \left(\e^{\alpha \ell} |\log \e|^{\gamma}\right)^{-1}}.
\Ee
where $c$ only depends on $|A|$, $|B|$ and $C$ depends on $|A|$, $|B|$, $t$.
\end{lem}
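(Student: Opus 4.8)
The plan is to estimate $\PP(\lambda_{\min}(t) \le \e)$ by working with the representation \eqref{e:SmaEigRep}, namely
$$\lambda_{\min}(t) = \inf_{|u|=1} \int_0^t \int_{\R^d_0} \sum_{i=1}^d |\Ll K_s B_i, u\Rr|^2 h(z)\, N(\dif z, \dif s).$$
First I would discard all jumps of size $|z|>2$ (where $h=0$) and restrict attention to the small jumps, whose intensity on $\{1/2 \le |z| \le 1\}$, say, is a fixed positive number $\mu_0$ depending on $\alpha$ and the bounds on $\vartheta$ in (H1). The key idea is: if $N$ has at least one jump $(z_k,s_k)$ with $|z_k|$ comparable to $1$ and $s_k$ in a well-chosen subinterval $[\tau, \tau + \Delta] \subset (0,t]$, then for \emph{that single jump} one already gets a lower bound $\lambda_{\min}(t) \ge c\,\sum_i |\Ll K_{s_k}B_i, u\Rr|^2$ uniformly in $u$, provided the function $s \mapsto \sum_i |\Ll K_s B_i, u\Rr|^2$ is bounded below on that subinterval. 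So the whole problem reduces to a \emph{deterministic} ODE estimate: show that for every unit vector $u$ there is a subinterval of $(0, t_0)$ of length bounded below (in terms of $\e$, $\ell$, $\gamma$) on which $\sum_i |\Ll K_s B_i, u\Rr|^2 \ge \e^{\text{(something)}}$, and then invoke the Poisson tail: the probability of having \emph{no} jump of the right size in a time-interval of length $\Delta$ is $\re^{-\mu_0 \Delta}$, which yields the stretched-exponential bound in \eqref{e:MEigEst}.

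For the deterministic step I would use the Taylor expansion \eqref{e:KalRel} from Lemma \ref{l:UvUpB}: writing $K_s B_i = \sum_{j=0}^{n} \frac{(-s)^j}{j!} A^j B_i + O(s^{n+1})$, the Kalman condition $\rank[B, AB, \dots, A^n B] = d$ guarantees that for each unit $u$ there is some pair $(i,j)$ with $j \le n$ and $|\Ll A^j B_i, u\Rr| \ge c_0 > 0$ (a compactness argument on the unit sphere gives a uniform $c_0$). Hence the real-analytic function $g_u(s) := \sum_i |\Ll K_s B_i, u\Rr|^2$ is not identically small near $0$; more quantitatively, its Taylor coefficients up to order $2n$ are not all $\lesssim \e$-small once $s$ is of order a suitable power of $\e$. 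A standard lemma on polynomials/analytic functions (if a degree-$m$ polynomial has a coefficient of size $\ge c_0$ then it exceeds $c_1 \eta^{m}$ on a subinterval of length $\gtrsim \eta$ of any interval of length $\eta$) then shows $g_u(s) \ge \e^{\alpha \ell}$-ish on a subinterval of $(0,t_0)$ of length $\gtrsim |\log \e|^{-\gamma}$-ish; choosing $\ell < 1/4$ leaves room to absorb the $|z_k|^2$ factor (recall $h(z) = \varphi(z)|z|^4 \ge c|z|^4$ near $|z| \sim 1$, but actually $h \equiv 1$ for $|z|\le 1$, which is even better) and the $O(s^{n+1})$ remainder using $|K_s| \le \re^{|A|s}$ from Lemma \ref{l:AuxLem}(1). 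One then sets $t_0$ equal to this subinterval-scale, so $t_0 \to 0$ as $\e_0 \to 0$, and for $t \ge t_0$ the interval $(0,t_0]$ is available.

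To convert "for each fixed $u$" into the infimum over the sphere, I would either (a) use a net: cover $S^{d-1}$ by $\e^{-O(d)}$ balls, handle each center, and control the Lipschitz variation of $g_u$ in $u$ (which costs only a polynomial-in-$\e^{-1}$ factor that is swallowed by the stretched exponential), or (b) note that the "bad" event $\{\lambda_{\min}(t) \le \e\}$ forces the Poisson measure to avoid, \emph{for every} $u$ simultaneously, a jump in its good interval — but since a single well-placed jump of the right magnitude gives the bound for all $u$ at once (because $g_u \ge c$ is what's needed, and we will actually show $\inf_u \sup_{s \in (0,t_0)} g_u(s) \ge c_0^2$ so there is a \emph{common} time-scale where every $u$ is non-degenerate — here one must be a little careful, the good \emph{subinterval} depends on $u$), option (a) is cleaner. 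I expect the \textbf{main obstacle} to be exactly this uniformity in $u$ combined with tracking how the length of the good subinterval degrades: one needs the quantitative analytic lemma with explicit dependence of the subinterval length on the lower bound for the coefficient and on $\e$, and then to verify that the exponents match \eqref{e:MEigEst}, i.e. that the good length is $\gtrsim (\e^{\alpha\ell}|\log\e|^\gamma)^{\text{small}}$ while the threshold on $g_u$ stays $\gtrsim \e^{\alpha \ell}$ — the bookkeeping of powers of $\e$ versus powers of $|\log \e|$, and the use of the freedom $\gamma > 0$ arbitrary and $\ell < 1/4$, is where the argument has to be done with care rather than by a soft estimate.
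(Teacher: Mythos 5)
Your overall architecture (restrict to a jump-size window, reduce to a deterministic lower bound on $g_u(s)=\sum_i|\langle K_sB_i,u\rangle|^2$ on a short time interval via the expansion \eqref{e:KalRel}, then invoke a Poisson ``no jump'' probability, and handle the infimum over $u$ by a net whose polynomial cost is absorbed) is the same skeleton as the paper's proof, and your polynomial/analytic-function lemma is a reasonable substitute for the paper's hierarchical case analysis (the paper instead picks the first index $l$ at which $|\langle A^lB_{i_0},u\rangle|$ exceeds a threshold of the form $|\log\varepsilon_0|^{-\gamma(4n)^{-l}}$ and applies the coercive inequality of Lemma \ref{l:UvUpB} to the remainder term in \eqref{e:KalRel}, which avoids any quantitative polynomial lemma). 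However, there is a genuine gap in the probabilistic step, and it is exactly where the exponent in \eqref{e:MEigEst} is supposed to come from. You propose to use jumps of size comparable to $1$ (say $|z|\in[1/2,1]$), whose intensity $\mu_0$ is a fixed constant, and to bound the bad event by the probability of having no such jump in a good subinterval of length $\Delta$, i.e.\ by $\re^{-\mu_0\Delta}$. Since your good subinterval has length at most of order $|\log\varepsilon|^{-\gamma}$ (and in any case does not grow as $\varepsilon\to0$), this bound tends to $1$; it never decays in $\varepsilon$ and therefore cannot yield the stretched-exponential bound $C\re^{-c(\varepsilon^{\alpha\ell}|\log\varepsilon|^{\gamma})^{-1}}$.

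The decay in $\varepsilon$ must come from letting the admissible jump size go down to $\varepsilon^{\ell}$: by (H1), $\nu(\varepsilon^{\ell}\le|z|\le1)\gtrsim c\,\varepsilon^{-\alpha\ell}$, so the probability of seeing no jump of size in $[\varepsilon^{\ell},1]$ during an interval of length $\theta|\log\varepsilon_0|^{-\gamma}$ is $\re^{-\theta|\log\varepsilon_0|^{-\gamma}\nu(\varepsilon^{\ell}\le|z|\le1)}$, which is precisely the claimed bound. The price is that a single such jump only contributes $h(\varepsilon^{\ell})\asymp\varepsilon^{4\ell}$ times the squared inner product, which is bounded below only by a power of $|\log\varepsilon_0|^{-1}$ (not by $\varepsilon^{\alpha\ell}$, as your final bookkeeping suggests); one needs $\varepsilon^{4\ell}|\log\varepsilon_0|^{-2\gamma}>8\varepsilon$, and this is exactly where the hypothesis $\ell\in(0,1/4)$ enters (conditions \eqref{e:E02}, \eqref{e:E04} in the paper). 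So in your write-up both the source of the factor $\varepsilon^{\alpha\ell}$ in the exponent (the mass of the L\'evy measure near the origin, not the size of $g_u$ or the length of the good interval) and the role of $\ell<1/4$ (beating the single-jump weight $h(\varepsilon^{\ell})$ against the threshold $2\varepsilon$) are misplaced; without repairing this, the argument proves at best a bound that does not vanish as $\varepsilon\to0$. A secondary, fixable omission: in the net argument the Lipschitz control of $\Lambda(t,u,\varepsilon^{\ell})$ in $u$ must be done on the event that the number of relevant jumps is at most $M$ (the paper takes $M=\varepsilon^{-2}$ and pays $C\re^{-1/\varepsilon^{2}}$ for the complement), since the modulus of continuity is random.
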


\begin{proof}
Our proof follows the spirit in \cite{No86}.
Write
$$\Lambda(t,u,\e^{\ell})=\int_0^t \int_{|z| \ge \e^{\ell}}\sum_{i=1}^d |\Ll K_s B_i, u\Rr|^2 h(z) N(\dif z, \dif s),$$
by \eqref{e:SmaEigRep}, to prove the desired inequality, it suffices to show that there exist some $\e_0>0$ depending on $\ell,\gamma$ and some $t_0$ depending on
$\e_0$ such that $\lim_{\e_0 \rightarrow 0} t_0=0$ and that for all
$\e \in (0,\e_0]$ and $t \ge t_0$,
\Bes
\PP\left(\inf_{|u|=1} \Lambda(t,u,\e^{\ell})\le \e\right) \le C\re^{-c \left(\e^{\alpha \ell} |\log \e|^{\gamma}\right)^{-1}}.
\Ees
Since $\Lambda(t,u,\e^{\ell})$ is increasing with respect to $t$, it suffices to prove
\Be \label{e:NewTar}
\PP\left(\inf_{|u|=1} \Lambda(t,u,\e^{\ell})\le \e\right) \le C\re^{-c \left(\e^{\alpha \ell} |\log \e|^{\gamma}\right)^{-1}} \ \ \ \ \forall \e \in (0,\e_0]
\ \forall t \in [t_0,1].
\Ee

\noindent Let us prove \eqref{e:NewTar} in the following three steps.
\vskip 3mm
\underline{\emph{Step 1}:}
Write
$$N_{t,h}=\int_0^t \int_{\R^d_0} h(z)N(\dif z,\dif s),$$
$$N_{t, \e^{\ell},h}=\int_0^t \int_{|z| \ge \e^{\ell}} h(z)N(\dif z,\dif s),$$
it is clear $N_{t, \e^{\ell},h} \le N_{t, h}$. By \eqref{e:CPP1} and Chebyshev inequality we have
\Be \label{e:CheInq}
\PP\left(N_{t, \e^{\ell},h}>M\right) \le \PP\left(N_{t,h}>M\right) \le C\re^{-M} \ \ \ \forall M>0,
\Ee
where $C$ depends on $t$.

Taking $\eta=\frac{\re^{-2|A|}}{2d|B|^2} \frac{\e}{M}$, by (1) of Lemma \ref{l:AuxLem},
we easily get that for all $u,v \in \mathbb{S}^{d-1}$ with $|u-v| \le \eta$,
\Bes
\bigg|\sum_{i=1}^d|\Ll K_s B_i, u\Rr|^2-\sum_{i=1}^d|\Ll K_s B_i, v\Rr|^2\bigg| \le \frac{\e}M \ \ \ \forall s \in [0,1].
\Ees
Hence, as $N_{t, \e^{\ell},h} \le M$ we have
\Be \label{e:UniCon}
|\Lambda(t,u,\e^{\ell})-\Lambda(t,v,\e^{\ell})|  \le \e \ \ \ \ \forall t \in [0,1].
\Ee
By the compactness, $\mathbb{S}^{d-1}$ has a finite open sets cover
$(\mcl U_k)_{1 \le k \le W}$
such that
$W \le C_d \left(M\e^{-1}\right)^{d-1}$ with $C_d$ only depending on $d$
and that the diameter of each open set $\mcl U_k$ is $\eta$.
\vskip 2mm
Take any $u_k \in \mcl U_k$ for all $k$, it is easy to see from \eqref{e:UniCon} that for all
$t \in [0,1]$ we have
$$\left\{\inf_{|u|=1} \Lambda(t,u,\e^{\ell})\le \e, N_{t, \e^{\ell}, h} \le M\right\}
\subset \bigcup_{k=1}^{W} \left\{\Lambda(t,u_k,\e^{\ell})\le 2\e, N_{t, \e^{\ell},h} \le M\right\}$$
and thus
\Be \label{e:SplBal}
\begin{split}
\PP\left(\inf_{|u|=1} \Lambda(t,u,\e^{\ell})\le \e, N_{t, \e^{\ell}, h} \le M\right)
& \le \sum_{k=1}^W \PP\left(\Lambda(t,u_k,\e^{\ell})\le 2\e, N_{t, \e^{\ell}, h} \le M\right) \\
& \le C_d (M \e^{-1})^{d-1} \sup_{u \in \mathbb{S}^{d-1}} \PP\left(\Lambda(t,u,\e^{\ell})\le 2\e\right).
\end{split}
\Ee
\vskip 3mm

\underline{\emph{Step 2}:} We shall prove in the step 3 below that for any $\gamma>0$ and $\ell \in (0,1/4)$, there exist some $\e_0>0$
depending on $\gamma$, $\ell$ and some $t_0 \in (0,1)$ depending on $\e_0$
such that $\lim_{\e_0 \rightarrow 0} t_0=0$ and that for all
$\e \in (0,\e_0)$ and $t \ge t_0$ we have
\Be \label{e:Tar2}
\begin{split}
\PP\left(\Lambda(t,u,\e^{\ell})\le 2\e\right) \le \re^{-c|\log \e|^{-\gamma} \nu(\e^{\ell} \le |z| \le 1)}
\end{split}
\Ee
for all $u \in \mathbb{S}^{d-1}$,
where $c>0$ only depends on $|A|$ and $|B|$.

Now we use the inequalities in the step 1 and \eqref{e:Tar2} to prove the desired \eqref{e:NewTar}.
By \eqref{e:CheInq} with $M=\frac1{\e^2}$ therein, we get
\Be
\PP\left(N_{t, \e^{\ell}, h}>\frac 1{\e^2}\right) \le C\re^{-1/\e^2}.
\Ee
This, together with \eqref{e:Tar2} and \eqref{e:SplBal}, implies
\Be
\begin{split}
\PP\left(\inf_{|u|=1} \Lambda(t,u,\e^{\ell})\le \e\right) &
\le \PP\left(\inf_{|u|=1} \Lambda(t,u,\e^{\ell})\le \e, N_{t, \e^{\ell}, h} \le \frac 1{\e^2}\right)+
\PP\left(N_{t, \e^{\ell}, h}>\frac 1{\e^2}\right) \\
 &\le C\re^{-1/\e^2}+C_d\e^{-3(d-1)} \re^{-c|\log \e|^{-\gamma} \nu(\e^{\ell} \le |z| \le 1)}.
\end{split}
\Ee
Tuning the number $c$ to be smaller and using the assumption (H1), we immediately obtain the desired inequality \eqref{e:NewTar}.
\vskip 3mm

\underline{\emph{Step 3}:}
It remains to show \eqref{e:Tar2}.
From the rank condition in Theorem \ref{t:MaiThm1}, there exist some $j_0 \le n$, $i_0  \le d$ and some constant $\kappa_0>0$ such that
\Be \label{e:ABijUpB}
|\Ll A^{j_0} B_{i_0}, u\Rr| \ge \kappa_0.
\Ee
Without loss of generality, we assume that $j_0 \ge 1$.
Denote $\theta=\frac{\re^{-|A|}}{2|A||B|}$ and choose a small number $\e_0 \in (0,1/4)$ satisfying the following conditions:
\begin{align}
& |\log \e_0|^{{-\gamma(4n)^{-n}}}<\min\{1/\theta,\kappa_0, 1/2\},  \label{e:E01}\\
& |\log \e_0|^{-2\gamma} h(\e_0^{\ell})>8\e \ \ \ \ \ \forall \e \in (0,\e_0],  \label{e:E02}\\
& |\log \e_0|^{-(4n)^{-n} \gamma} \le \min_{1 \le j \le n} \left(\frac{2^{-j-3}\theta^j}{j!} \left(\frac{\theta}{1+\theta}\right)^j\right), \label{e:E03} \\
& \inf_{1 \le j \le n} \left(\frac{\theta^j}{2^{j+1} j!}\right)^2|\log \e_0|^{-\frac{2(j+1)\gamma}{(4n)^j}} h(\e)>8\e \ \ \ \ \ \forall \e \in (0,\e_0].  \label{e:E04}
\end{align}
It is easy to check that as $\e_0$ is sufficiently small the conditions \eqref{e:E01} and \eqref{e:E03} both hold.
 \eqref{e:E02} and \eqref{e:E04} follow from \eqref{e:SpeH} and the assumption $\ell \in (0,1/4)$ for sufficiently small $\e_0$.

 We choose $t_0=\max\{\delta, \tl \delta\}$ with $\delta$ and $\tl \delta$ defined by \eqref{e:Delta} and \eqref{e:TlDelta} respectively.
 It is clear that $\lim_{\e_0 \rightarrow 0} t_0=0$.
 Now we prove \eqref{e:Tar2} by considering the following two cases. The conditions \eqref{e:E01} and \eqref{e:E02} will be used in the Case 1 below, while \eqref{e:E03} and \eqref{e:E04} will be used in Case 2.

\emph{Case 1:} $|\Ll B_{i_0}, u\Rr| \ge |\log \e_0|^{-\gamma}$.
Choose
\Be \label{e:Delta}
\delta=\theta |\log \e_0|^{-\gamma},
\Ee
thanks to
\eqref{e:E01} we have $\delta<1$. By Lemma \ref{l:UvUpB} we get
$$|\Ll K_s B_{i_0}, u\Rr|\ge \frac 12 |\log \e_0|^{-\gamma} \ \ \ \  \forall s \in (0, \delta].$$
Write $N_{t,\e^{\ell}}=\int_0^t \int_{\e^{\ell} \le |z| \le 1} N(\dif z, \dif s)$, it follows from the above inequality and \eqref{e:SpeH}
that for all $\e \in (0,\e_0]$
\Be \label{e:UvUpB1}
\begin{split}
\int_0^\delta \int_{\e^{\ell} \le |z| \le 1} |\Ll K_s B_{i_0}, u\Rr|^2 h(z) N(\dif z,\dif s)
& \ge \frac 14\int_0^\delta \int_{\e^{\ell} \le |z| \le 1} |\log \e_0|^{-2\gamma} h(z)  N(\dif z,\dif s)   \\
& \ge  \frac 14 |\log \e_0|^{-2\gamma} h(\e^{\ell}) N_{\delta, \e^{\ell}}.
\end{split}
\Ee
A straightforward computation gives
\Be \label{e:0Jum}
\PP(N_{\delta, \e^{\ell}}=0)=\re^{-\delta \nu(\e^{\ell} \le |z| \le 1)}.
\Ee
As $N_{\delta, \e^{\ell}} \ge 1$, \eqref{e:UvUpB1} and \eqref{e:E02} imply
\Bes
\int_0^\delta \int_{\e^{\ell} \le |z| \le 1} |\Ll K_s B_{i_0} v\Rr|^2 h(z) N(\dif z,\dif s) \ge
\frac 14 |\log \e_0|^{-2\gamma} h(\e^{\ell}) >2\e, \ \ \ \forall \e \in (0,\e_0].
\Ees
Hence,
\Be \label{e:1Jum}
\PP\left(\int_0^\delta \int_{\e^{\ell} \le |z| \le 1} |\Ll K_s B_{i_0}, u\Rr|^2 h(z) N(\dif z,\dif s) \le 2\e, N_{\delta, \e^{\ell}} \ge 1\right)=0.
\Ee
By \eqref{e:0Jum}, \eqref{e:1Jum} and the fact $t_0>\delta$, we have that for all $t \ge t_0$,
\Be \label{e:C1Inq}
\begin{split}
& \ \ \  \PP\left(\int_0^t \int_{\e^{\ell} \le |z| \le 1} |\Ll K_s B_{i_0},u\Rr|^2 h(z) N(\dif z,\dif s) \le
2\e\right)  \\
& \le \PP\left(\int_0^\delta \int_{\e^{\ell} \le |z| \le 1} |\Ll K_s B_{i_0},u\Rr|^2 h(z) N(\dif z,\dif s) \le
2\e\right)  \le \re^{-\delta \nu(\e^{\ell} \le |z| \le 1)}.
\end{split}
\Ee
By the definition of $\Lambda(t,u,\e^\ell)$ and $\theta |\log \e|^{-\gamma} \le \delta$,
the above inequality immediately implies the desired inequality \eqref{e:Tar2}.

\emph{Case 2:} $|\Ll B_{i_0}, u\Rr|<|\log \e_0|^{-\gamma}$.
Define
\Bes
\begin{split}
l=\inf\big\{j \ge 1: \ |\Ll A^{k} B_{i_0}, u\Rr|&<|\log \e_0|^{-\gamma(4n)^{-k}}, \ 0 \le k \le j-1; \\
& \ \ \ \ \ |\Ll A^{j} B_{i_0}, u\Rr| \ge |\log \e_0|^{{-\gamma(4n)^{-j}}}\big\},
\end{split}
\Ees
where $n$ is the constant in Theorem \ref{t:MaiThm1}. By \eqref{e:ABijUpB} and \eqref{e:E01}, as $\e_0$ is sufficiently small we have
$$|\log \e_0|^{{-\gamma(4n)^{-j}}} \le  |\log \e_0|^{{-\gamma(4n)^{-n}}} \le \kappa_0 \le |\Ll A^{j_0} B_{i_0}, u\Rr|.$$
This and the condition $|\Ll B_{i_0}, u\Rr|<|\log \e_0|^{-\gamma}$ imply
$$1 \le l \le j_0.$$

\noindent Choose
\Be \label{e:TlDelta}
\tl \delta=\theta |\log \e_0|^{{-\gamma(4n)^{-l}}}
\Ee
it is obvious $\tl \delta \le t_0$ by the definition of $t_0$ above. By Lemma \ref{l:UvUpB} and the definition of $l$, we have for all $s \in (0,\tl \delta]$
$$\Ll K_{s} A^{l} B_{i_0},u\Rr \ge \frac 12 |\log \e_0|^{{-\gamma(4n)^{-l}}} \ \ \ {\rm or} \ \ \
\Ll K_{s} A^{l} B_{i_0}, u\Rr \le -\frac 12 |\log \e_0|^{{-\gamma(4n)^{-l}}}.$$
The above two inequalities imply
\Be \label{e:IntUpB}
\left|\int_0^t \int_0^{s_1}...\int_0^{s_{l-1}} \Ll K_{s_{l}} A^{l} B_{i_0}, u\Rr
\dif s_{l}...\dif s_1\right| \ge \frac {t^{l}}{2l!} |\log \e_0|^{{-\gamma(4n)^{-l}}} \ \ \ \forall t \in (0,\tl \delta].
\Ee
From the definition of $l$ again, we have
\Be \label{e:DefLCns}
\left|\frac{(-t)^j}{j!} \Ll A^{j} B_{i_0}, u\Rr\right| < \frac{t^{j}}{j!} |\log \e_0|^{{-\gamma(4n)^{-j}}}
\ \ \ \ \forall 0 \le j \le l-1.
\Ee
Applying \eqref{e:KalRel}, by \eqref{e:IntUpB} and \eqref{e:DefLCns} we get
\Bes
\begin{split}
|\Ll K_t B_{i_0},u\Rr| & \ge \frac {t^{l}}{2l!} |\log \e_0|^{{-\gamma(4n)^{-l}}}
-\sum_{j=0}^{l-1} \frac{t^{j}}{j!} |\log \e_0|^{{-\gamma(4n)^{-j}}} \ \ \ \forall t \in (0,\tl \delta].
\end{split}
\Ees
For all $t \in [\tl \delta/2, \tl \delta]$ we have
\Be \label{e:KtBu}
\begin{split}
|\Ll K_t B_{i_0},u\Rr| & \ge \frac {\left(\frac{\tl \delta}2\right)^{l}}{2l!} |\log \e_0|^{{-\gamma(4n)^{-l}}}
-\sum_{j=0}^{l-1} \frac{\tl \delta^{j}}{j!} |\log \e_0|^{{-\gamma(4n)^{-j}}} \\
&=\frac {\theta^{l}}{2^{l+1} l!} |\log \e_0|^{{-\frac{l+1}{(4n)^{l}} \gamma}}
-\sum_{j=0}^{l-1} \frac{\theta^j}{j!} |\log \e_0|^{{-\frac{j+(4n)^{l-j}}{(4n)^{l}} \gamma}}.
\end{split}
\Ee
Observing
\Bes
\begin{split}
\sum_{j=0}^{l-1} \frac{\theta^j}{j!} |\log \e_0|^{{-\frac{j+(4n)^{l-j}}{(4n)^{l}} \gamma}}& \le (\theta+1)^l |\log \e_0|^{{-\frac{l+1}{(4n)^{l}} \gamma}}
 \sum_{j=0}^{l-1} |\log \e_0|^{{-\frac{(4n)^{l-j}-(l-j)-1}{(4n)^{l}} \gamma}}
\end{split}
\Ees
and
$$(4n)^{l-j}-(l-j)-1 \ge (l-j) \ \ \ \ \ \ \forall j<l,$$
we get 
\Be \label{e:SumLogE}
\begin{split}
\sum_{j=0}^{l-1} \frac{\theta^j}{j!} |\log \e_0|^{{-\frac{j+(4n)^{l-j}}{(4n)^{l}} \gamma}}
& \le (\theta+1)^l |\log \e_0|^{{-\frac{l+1}{(4n)^{l}} \gamma}}
\sum_{j=0}^{l-1}  |\log \e_0|^{{-\frac{l-j}{(4n)^{l}} \gamma}} \\
& \le 2(\theta+1)^l |\log \e_0|^{{-\frac{l+2}{(4n)^{l}} \gamma}},
\end{split}
\Ee
where the last inequality is by \eqref{e:E01}.
It follows from \eqref{e:E03} that
$$\frac {\theta^{l}}{2^{l+1} l!}-2(\theta+1)^l |\log \e_0|^{{-\frac{1}{(4n)^{l}} \gamma}} \ge \frac {\theta^{l}}{2^{l+2} l!},$$
which, together with \eqref{e:SumLogE} and \eqref{e:KtBu}, gives
$$|\Ll K_t B_{i_0},u\Rr| \ge \frac {\theta^{l}}{2^{l+2} l!} |\log \e_0|^{{-\frac{l+1}{(4n)^{l}} \gamma}} \ \ \ \ \forall t \in [\tl \delta/2, \tl \delta].$$
By the same argument as in the case 1, we have
\Bes
\PP\left(\int_{\tl \delta/2}^{\tl \delta} \int_{\e^{\ell} \le |z| \le 1} |\Ll K_t B_{i_0},u\Rr|^2 h(z) N(\dif z,\dif s) \le 2\e\right) \le
\re^{-\frac{\tl \delta}2 \nu(\e^{\ell} \le |z| \le 1)} \ \ \ \forall \e \in (0,\e_0],
\Ees
hence, for all $t \ge t_0$ (recall $t_0 \ge \tl \delta$) we have
\Be \label{e:C2Inq}
\PP\left(\int_{0}^{t} \int_{\e^{\ell} \le |z| \le 1} |\Ll K_t B_{i_0},u\Rr|^2 h(z) N(\dif z,\dif s)\le 2\e\right) \le
\re^{-\frac{\tl \delta}2 \nu(\e^{\ell} \le |z| \le 1)} \ \ \ \forall \e \in (0,\e_0].
\Ee
In view of $\tl \delta \ge \delta$ and $\delta=\theta |\log \e_0|^{-\gamma}$, it follows from the previous inequality that for $\forall \e \in (0,\e_0]$
\Bes 
\PP\left(\int_{0}^{t} \int_{\e^{\ell} \le |z| \le 1} |\Ll K_t B_{i_0},u\Rr|^2 h(z) N(\dif z,\dif s) \le 2\e\right) \le
\re^{-\frac{\delta}2 \nu(\e^{\ell} \le |z| \le 1)} \le  \re^{-\frac12 \theta |\log \e|^{-\gamma}\nu(\e^{\ell} \le |z| \le 1)}
\Ees
and thus the desired \eqref{e:Tar2}.
\end{proof}

\begin{proof}
By Lemma \ref{l:SmoCri}, it suffices to show $\mcl M_t$ is invertible a.s. and
$$\E |\mcl M^{-1}_t|^p<\infty \ \ \ \ \forall p>0.$$
Take any $t_1>0$ and fix it. From Lemma \ref{l:MEigEst} we can choose $\e_0>0$ sufficiently small so that
$$\delta \le t_1/2$$
and that \eqref{e:MEigEst} holds for $t>\delta$ (in particular for $t=t_1$). Taking $\e=1/n$ in \eqref{e:MEigEst} and
writing $E_n=\{\lambda_{{\rm min}}(t_1) \le 1/n\}$, we have
$$\sum_{n=n_0}^\infty \PP\left(E_n\right) \le \sum_{n=n_0}^\infty C\re^{-c \left(n^{\alpha \ell} |\log n|^{\gamma}\right)}<\infty,$$
where $n_0=[1/\e_0]+1$. By Borell-Cantelli Lemma we have $\lambda_{{\rm min}}(t_1)>0$ a.s. and thus $\mcl M_{t_1}$ is invertible a.s..
We take the largest
eigenvalue of $\mcl M^{-1}_{t_1}$ i.e. $(\lambda_{{\rm min}}(t_1))^{-1}$ as $|\mcl M^{-1}_{t_1}|$ (recall all the norms of a finite
dimension space are equivalent), \eqref{e:MEigEst} implies
$$\PP(|\mcl M^{-1}_{t_1}| \ge 1/\e) \le C\re^{-c \left(\e^{\alpha \ell} |\log \e|^{\gamma}\right)^{-1}} \ \ \ \forall \e \in (0,\e_0],$$
which immediately implies the desired inequality for $t=t_1$.
Since $t_1>0$ is arbitrary, the proof is completed.
\end{proof}

\section{Proof of Theorem \ref{t:MaiThm2}}
To prove Theorem \ref{t:MaiThm2}, we shall use the same procedure as proving Theorem \ref{t:MaiThm1}. The crucial step is Lemma \ref{l:MEigEst2} below,
which plays the same role as Lemma \ref{l:MEigEst} in the proof of Theorem \ref{t:MaiThm2}. With this lemma, we can prove Theorem \ref{t:MaiThm2} by the same argument as
showing Theorem \ref{t:MaiThm1}. So, in this section we only prove the crucial lemma but omit how to apply it to prove the theorem.
\begin{lem} \label{l:MEigEst2}
Assume that the conditions in Theorem \ref{t:MaiThm2} hold. For any $\gamma>0$ and $\ell \in (0,1/4)$, there exist some $\e_0>0$
depending on $\gamma$, $\ell$ and some $t_0 \in (0,1)$ depending on $\e_0$
such that $\lim_{\e_0 \rightarrow 0} t_0=0$ and that for all
$\e \in (0,\e_0)$ and $t \ge t_0$ we have
\Be \label{e:MEigEst2}
\PP\left(\lambda_{{\rm min}}(t) \le \e\right) \le C\re^{-c \left(\e^{\alpha \ell} |\log \e|^{\gamma}\right)^{-1}}.
\Ee
where $c$ only depends on $|A|$, $|B|$ and $C$ depends on $|A|$, $|B|$, $t$.
\end{lem}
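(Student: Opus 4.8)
The plan is to run the three–step scheme of the proof of Lemma~\ref{l:MEigEst}, the only genuinely new ingredient being the replacement of the Kalman rank condition by the uniform Hörmander condition of Theorem~\ref{t:MaiThm2}; write $c_0>0$ for the value of that infimum. Steps~1 and~2 would be carried out essentially verbatim: for the compactness/discretisation of Step~1 I would use the pathwise bounds $|K_s|=|J_s^{-1}|\le\re^{\|\nabla a\|_\infty s}$ from Lemma~\ref{l:AuxLem}(1) to get the Lipschitz continuity of $u\mapsto\Lambda(t,u,\e^\ell)$, and \eqref{e:CPP1} to cut off $N_{t,\e^\ell,h}\le 1/\e^2$; this reduces everything to showing, for each fixed $u\in\mathbb S^{d-1}$,
\[
\PP\bigl(\Lambda(t,u,\e^\ell)\le 2\e\bigr)\le\re^{-c|\log\e|^{-\gamma}\nu(\e^\ell\le|z|\le 1)}\qquad\forall\,\e\in(0,\e_0],\ \forall\,t\in[t_0,1],
\]
for suitable $\e_0,t_0$ with $\lim_{\e_0\to0}t_0=0$.

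For Step~3, fix $u$ and apply the uniform Hörmander condition at $x=X_0$: $\sum_i\bigl(|\langle\nabla a(x)B_i,u\rangle|^2+|\langle B_i,u\rangle|^2\bigr)\ge c_0$. If $\max_i|\langle B_i,u\rangle|\ge|\log\e_0|^{-\gamma}$, then since $\tfrac{d}{ds}\langle K_sB_{i_0},u\rangle=-\langle K_s\nabla a(X_s)B_{i_0},u\rangle$ is bounded pathwise by $\|\nabla a\|_\infty|B|\re^{\|\nabla a\|_\infty}$, Case~1 of Lemma~\ref{l:MEigEst} applies unchanged (keep $|\langle K_sB_{i_0},u\rangle|\ge\tfrac12|\log\e_0|^{-\gamma}$ on $(0,\delta]$ with $\delta=\theta|\log\e_0|^{-\gamma}$, then use $\PP(N_{\delta,\e^\ell}=0)=\re^{-\delta\nu(\e^\ell\le|z|\le1)}$ and an analogue of \eqref{e:E02}). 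Otherwise $\max_i|\langle B_i,u\rangle|<|\log\e_0|^{-\gamma}$, so for $\e_0$ small $\sum_i|\langle\nabla a(x)B_i,u\rangle|^2\ge c_0/2$ and there is an index $i_0$ with $|\langle\nabla a(x)B_{i_0},u\rangle|\ge\kappa_0:=\sqrt{c_0/(2d)}$; here I would use the first-order identity $K_tB_{i_0}=B_{i_0}-\int_0^tK_s\nabla a(X_s)B_{i_0}\,ds$ a \emph{single} time — this is what ``applying the inequality only once'' means, as opposed to the iterated expansion \eqref{e:KalRel} in Lemma~\ref{l:MEigEst}.

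The hard part is Case~2. In the linear problem $K_s=\re^{-As}$ is deterministic, which is exactly why Case~2 of Lemma~\ref{l:MEigEst} was a routine ODE estimate; here $K_s$ depends on the \emph{jumping} path $X$, and a lower bound on $\psi(s):=\sum_i|\langle K_sB_i,u\rangle|^2$ over a time interval must hold either pathwise or off an event of probability $\lesssim\re^{-c|\log\e|^{-\gamma}\nu(\e^\ell\le|z|\le1)}$ — an exceptional event of $\e$-independent probability (``a big jump occurs in $[0,t_0]$'') would already be fatal, so moment bounds on $X_s-x$ are useless. The substitute I would use is the pathwise inequality, valid for every $s\in[0,1]$,
\[
\psi(s)+\sum_i\bigl|\langle K_s\nabla a(X_s)B_i,u\rangle\bigr|^2\ \ge\ c_0\,|K_s^*u|^2\ \ge\ c_0\,\re^{-2\|\nabla a\|_\infty}=:c_2>0,
\]
obtained by applying the uniform Hörmander condition at $X_s$ to the vector $K_s^*u$, using $\re^{-\|\nabla a\|_\infty}\le|K_s^*u|\le\re^{\|\nabla a\|_\infty}$. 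Since $\tfrac{d}{ds}\langle K_sB_i,u\rangle=-\langle K_s\nabla a(X_s)B_i,u\rangle$, on the event $\{\psi\le2\e\}$ one gets $\sum_i\big|\tfrac{d}{ds}\langle K_sB_i,u\rangle\big|^2\ge c_2/2$; then, invoking (H2) (all derivatives of $a$, and $\|a\|_\infty$, bounded) to control the continuous part of $\psi''$ and the jump increments of $\psi'$ — this is where $\ell<1/4$ enters, so that the $O(\sqrt\e)$ size of $\sqrt\psi$ dominates the $O(\e^{-\ell(\alpha-1)})$ drift coming from compensating the small jumps when $\alpha\ge1$ — one shows that wherever $\psi\le2\e$ the function $\psi$ is convex up to a negligible perturbation. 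A convexity argument then forces: if $\Lambda(t,u,\e^\ell)\le2\e$ (so $\psi\le2\e$ at every jump of size in $[\e^\ell,1]$ before time $t$), then off a super-exponentially rare event controlling the small-jump fluctuations and the count $N_{t,\e^\ell}$, consecutive such jumps in a window of length $\asymp|\log\e_0|^{-\gamma}$ are spaced $\lesssim\sqrt\e$ apart, hence at least $\asymp|\log\e_0|^{-\gamma}\e^{-1/2}$ of them occur there; as $\alpha\ell<1/2$ this far exceeds the expected count $\asymp|\log\e_0|^{-\gamma}\e^{-\alpha\ell}$, a large deviation of probability $\lesssim\re^{-c\e^{-1/2}|\log\e|}$. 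Combining this with $\PP(\text{no such jump in the window})=\re^{-\ell_0\nu(\e^\ell\le|z|\le1)}$ for some $\ell_0\asymp|\log\e_0|^{-\gamma}$, and recalling $|\log\e|\ge|\log\e_0|$, gives the displayed bound. I expect the bookkeeping of this convexity / few-versus-many-jumps step for $\alpha\in[1,2)$ to be the main technical obstacle.
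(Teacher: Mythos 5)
Your Steps 1--2 and your Case 1 coincide with the paper's proof, but your Case 2 takes a different route and it has a genuine gap at its core. The paper's Case 2 contains no convexity argument and no large-deviation/jump-count estimate: because the H\"ormander condition is assumed \emph{uniformly} in $x$, once $|\Ll B_i,u\Rr|<|\log\e_0|^{-\gamma}$ for all $i$ one has an index $i_1$ with $|\Ll\nabla a(x)B_{i_1},u\Rr|\ge\kappa_0$ for \emph{every} $x$ (the paper fixes a single $i_1$ for all $x$; strictly speaking this and the constancy of the sign in $x$ deserve a word, by continuity and connectedness, but this is a minor point), and Lemma \ref{l:UvUpB2} then gives the sign-definite pathwise bound $|\Ll K_s\nabla a(y)B_{i_1},u\Rr|\ge\kappa_0/2$ for all $s\le(\theta\kappa_0)\wedge1$ and all $y$, in particular for $y=X_s$, with no control of the path whatsoever. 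Feeding this into the single identity $K_tB_{i_1}=B_{i_1}-\int_0^tK_s\nabla a(X_s)B_{i_1}\,\dif s$ yields $|\Ll K_tB_{i_1},u\Rr|\ge\kappa_0t/2-|\log\e_0|^{-\gamma}\ge|\log\e_0|^{-\gamma}$ on the window $[\tl\delta/2,\tl\delta]$ with $\tl\delta=\frac{8}{\kappa_0}|\log\e_0|^{-\gamma}$, and the proof is finished exactly as in Case 1 by the probability $\re^{-\frac{\tl\delta}{2}\nu(\e^\ell\le|z|\le1)}$ that no jump with $|z|\in[\e^\ell,1]$ occurs in that window. The only probabilistic input is this one zero-jump event; no moments of $X_s-x$, no spacing of jumps, no counting.

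Your substitute does not close. The pathwise inequality $\psi(s)+\sum_i|\Ll K_s\nabla a(X_s)B_i,u\Rr|^2\ge c_0|K_s^*u|^2$ is correct, but it controls only the \emph{magnitude} of the derivatives $\frac{\dif}{\dif s}\Ll K_sB_i,u\Rr=-\Ll K_s\nabla a(X_s)B_i,u\Rr$, not their signs: since $s\mapsto\nabla a(X_s)$ is merely c\`adl\`ag (with infinitely many small jumps, of unbounded variation when $\alpha\ge1$), these derivatives may oscillate in sign with the jumps of $X$, so a pointwise lower bound on their squared sum does not force $\psi$ to leave the level $O(\e)$ on any macroscopic interval -- cancellation can occur. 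The convexity repair you propose is not available: $\psi''$ does not exist classically, and bounding the jump increments of $\psi'$ means bounding increments of $X$, which reintroduces precisely the $\e$-independent-probability ``big jump in the window'' events you correctly identified as fatal; the subsequent ``jumps spaced $\lesssim\sqrt\e$, hence many jumps'' deduction also does not follow from the stated convexity, since $\{\psi\le2\e\}$ may consist of many short dips and nothing forces a constrained jump in each of them. Incidentally, the $O(\e^{-\ell(\alpha-1)})$ compensator drift you invoke is spurious: $K_t$ solves the pure ODE \eqref{e:KtEqn}, so $\psi$ is Lipschitz with constant depending only on $\|\nabla a\|_\infty$ and $|B|$; the restriction $\ell<1/4$ enters only through \eqref{e:E022}, i.e.\ the requirement $h(\e^\ell)\gg\e$ for the weight $h(z)\sim|z|^4$ near the origin. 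The missing idea, in short, is to exploit the uniformity in $x$ of the H\"ormander condition to obtain a sign-definite, path-independent lower bound on $\Ll K_s\nabla a(X_s)B_{i_1},u\Rr$ for one fixed index $i_1$, rather than trying to extract growth of $\psi$ from a magnitude-only bound.
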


To prove the above lemma, we need the following auxiliary lemma, which can be shown by an argument similar to proving Lemma \ref{l:UvUpB}.
\begin{lem} \label{l:UvUpB2}
Let $u, v \in \R^d$ both be nonzero vectors with some $p>0$ such that
\Be
\Ll v,u\Rr \ge p   \ \ ({\rm or} \ \Ll v,u\Rr \le -p).
\Ee
Then there exist some $\theta=\frac{1}{2|u||v|\|\nabla a\|_\infty}\re^{-\|\nabla a\|_\infty}$
$$\delta=\left(\theta p\right) \wedge 1$$ such that for all $t  \in (0,\delta)$ and $x \in \R^d$
\Be \label{e:UniUpB2}
\Ll K_t(x) v,u\Rr\ge p/2   \ \ ({\rm respectively} \ \Ll K_t(x) v,u\Rr\le-p/2).
\Ee
\end{lem}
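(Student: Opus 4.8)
The plan is to reproduce the proof of Lemma~\ref{l:UvUpB} almost verbatim, replacing the constant matrix $A$ by the state-dependent matrix $\nabla a(X_t)$ and its operator norm $|A|$ by $\|\nabla a\|_\infty$. The only additional input is the uniform bound $|K_t(x)| \le \re^{\|\nabla a\|_\infty t}$ from part~(1) of Lemma~\ref{l:AuxLem}; since this bound does not depend on $x$, neither will the resulting $\delta$, which is exactly the uniformity required in \eqref{e:UniUpB2}.

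Concretely, first I would differentiate $t \mapsto \Ll K_t(x) v, u\Rr$ using \eqref{e:KtEqn}, obtaining
\Be
\frac{\dif}{\dif t}\Ll K_t(x) v, u\Rr = -\Ll K_t(x) \nabla a(X_t) v, u\Rr .
\Ee
Bounding the right-hand side by $|K_t(x)|\,\|\nabla a\|_\infty\,|v|\,|u| \le \re^{\|\nabla a\|_\infty t}\,\|\nabla a\|_\infty\, |v|\,|u|$ and integrating from $0$ to $t$ gives, for all $t \in (0,1)$ and all $x \in \R^d$,
\Be
\big|\Ll K_t(x) v, u\Rr - \Ll v, u\Rr\big| \le |u|\,|v|\,\|\nabla a\|_\infty\,\re^{\|\nabla a\|_\infty}\, t .
\Ee
With $\theta = \frac{1}{2|u||v|\|\nabla a\|_\infty}\re^{-\|\nabla a\|_\infty}$ and $\delta = (\theta p)\wedge 1$, for every $t \in (0,\delta)$ the right-hand side above is at most $|u|\,|v|\,\|\nabla a\|_\infty\,\re^{\|\nabla a\|_\infty}\,\theta p = p/2$, so $\Ll v, u\Rr \ge p$ forces $\Ll K_t(x) v, u\Rr \ge p/2$ and, symmetrically, $\Ll v, u\Rr \le -p$ forces $\Ll K_t(x) v, u\Rr \le -p/2$. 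Every constant appearing above depends only on $\|\nabla a\|_\infty$, $|u|$ and $|v|$, so $\theta$ and $\delta$ are independent of $x$, which proves \eqref{e:UniUpB2}.

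I do not expect a genuine obstacle here: the argument is a one-line application of the fundamental theorem of calculus combined with the a priori bound on $K_t$. The only point worth flagging is conceptual rather than technical --- in the nonlinear setting the matrices $\nabla a(X_s)$ at different times neither commute nor are constant, so there is no analogue of the Kalman-type expansion \eqref{e:KalRel}. Consequently this lemma cannot be bootstrapped to propagate lower bounds onto higher-order bracket directions along the Jacobi flow, which is precisely why the proof of Lemma~\ref{l:MEigEst2} must invoke this estimate only once, in contrast with the iterative use of \eqref{e:UniUpB} in Lemma~\ref{l:MEigEst}.
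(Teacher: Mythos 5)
Your proof is correct and is exactly the argument the paper intends: it states that Lemma \ref{l:UvUpB2} follows by the same argument as Lemma \ref{l:UvUpB}, and you carry that out by differentiating $\Ll K_t(x)v,u\Rr$ via \eqref{e:KtEqn}, bounding $|K_t(x)|\le \re^{\|\nabla a\|_\infty t}$, and integrating, with all constants depending only on $\|\nabla a\|_\infty$, $|u|$, $|v|$, hence uniform in $x$. Your closing remark about the absence of a Kalman-type expansion \eqref{e:KalRel} in the nonlinear case, and hence the one-shot use of the estimate in Lemma \ref{l:MEigEst2}, also matches the paper's discussion.
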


\begin{proof} [Proof of Lemma \ref{l:MEigEst2}]
We first repeat exactly the steps 1 and 2 in the proof of Lemma \ref{l:MEigEst2}. To complete the proof, we only proceed to prove the
step 3.

Recall that the step 3 is to show that for any $\gamma>0$ and $\ell \in (0,1/4)$, there exist some $\e_0>0$
depending on $\gamma$, $\ell$ and some $t_0 \in (0,1)$ depending on $\e_0$
such that $\lim_{\e_0 \rightarrow 0} t_0=0$ and that for all
$\e \in (0,\e_0)$ and $t \ge t_0$ we have
\Be \label{e:Tar22}
\begin{split}
\PP\left(\Lambda(t,u,\e^{\ell})\le 2\e\right) \le \re^{-c|\log \e|^{-\gamma} \nu(\e^{\ell} \le |z| \le 1)}
\end{split}
\Ee
for all $u \in \mathbb{S}^{d-1}$, where $c>0$ only depends on $|A|$ and $|B|$.

By the uniform H\"{o}rmander condition, we have some $\kappa_0>0$ such that
\Be \label{e:HorTra}
\inf_{|u|=1} \sum_{i=1}^d \left(|\Ll \nabla a(x) B_i,u\Rr|^2+|\Ll B_i,u\Rr|^2\right) \ge 2d \kappa^2_0.
\Ee
Write $\theta=\frac{\re^{-\|\nabla a\|_\infty}}{2\|\nabla a\|_\infty |B|}$, we choose an $\e_0 \in (0,1/4)$ such that
\Be \label{e:E012}
|\log \e_0|^{-\gamma} \le \kappa_0,
\Ee
\Be \label{e:E022}
|\log \e_0|^{-2\gamma} h(\e^{\ell})>8 \e \ \ \ \ \ \forall \e \in (0,\e_0],
\Ee
\Be \label{e:E032}
\frac{8}{\kappa_0}|\log \e_0|^{-\gamma}<\left(\theta \kappa_0\right) \wedge 1.
\Ee
As $\e_0>0$ is sufficiently large, the above four conditions clearly hold.
Choosing
$$t_0=\max\{\theta |\log \e_0|^{-\gamma}, \frac{4}{\kappa_0}|\log \e_0|^{-\gamma}\},$$ we have
$t_0<1$ as $\e_0$ is sufficiently small.
We shall prove \eqref{e:Tar22} by considering the following two cases.

\emph{Case 1}: If there exists some $i_0 \in \{1,...,d\}$ such that
$$|\Ll B_{i_0}, u\Rr| \ge |\log \e_0|^{-\gamma},$$
choose
\Be \label{e:Delta2}
\delta=\theta |\log \e_0|^{-\gamma},
\Ee
it is clear that $\delta<t_0<1$ by the definition of $t_0$.
 By Lemma \ref{l:UvUpB2}, for all $x \in \R^d$ the following relation holds:
$$|\Ll K_s(x) B_{i_0}, u\Rr|\ge \frac 12 |\log \e_0|^{-\gamma} \ \ \ \  \forall s \in (0, \delta].$$
Write $N_{t,\e^{\ell}}=\int_0^t \int_{\e^{\ell} \le |z| \le 1} N(\dif z, \dif s)$, it follows from the above inequality and \eqref{e:SpeH}
that for all $\e \in (0,\e_0]$
\Be \label{e:UvUpB2}
\begin{split}
\int_0^\delta \int_{\e^{\ell} \le |z| \le 1} |\Ll K_s(x) B_{i_0}, u\Rr|^2 h(z) N(\dif z,\dif s)
& \ge \frac 14\int_0^\delta \int_{\e^{\ell} \le |z| \le 1} |\log \e_0|^{-2\gamma} h(z)  N(\dif z,\dif s)   \\
& \ge  \frac 14 |\log \e_0|^{-2\gamma} h(\e^{\ell}) N_{\delta, \e^{\ell}}.
\end{split}
\Ee
A straightforward computation gives
\Be \label{e:0Jum-1}
\PP(N_{\delta, \e^{\ell}}=0)=\re^{-\delta \nu(\e^{\ell} \le |z| \le 1)}.
\Ee
As $N_{\delta, \e^{\ell}} \ge 1$, \eqref{e:UvUpB2} and \eqref{e:E022} imply
\Bes
\int_0^\delta \int_{\e^{\ell} \le |z| \le 1} |\Ll K_s B_{i_0} v\Rr|^2 h(z) N(\dif z,\dif s) \ge
\frac 14 |\log \e_0|^{-2\gamma} h(\e^{\ell}) >2\e, \ \ \ \forall \e \in (0,\e_0].
\Ees
Hence,
\Be \label{e:1Jum-1}
\PP\left(\int_0^\delta \int_{\e^{\ell} \le |z| \le 1} |\Ll K_s B_{i_0}, u\Rr|^2 h(z) N(\dif z,\dif s) \le 2\e, N_{\delta, \e^{\ell}} \ge 1\right)=0.
\Ee
By \eqref{e:0Jum-1}, \eqref{e:1Jum-1} and the fact $t_0>\delta$ we have that for all $t \ge t_0$,
\Be \label{e:C1Inq}
\begin{split}
& \ \ \  \PP\left(\int_0^t \int_{\e^{\ell} \le |z| \le 1} |\Ll K_s B_{i_0},u\Rr|^2 h(z) N(\dif z,\dif s) \le
2\e\right)  \\
& \le \PP\left(\int_0^\delta \int_{\e^{\ell} \le |z| \le 1} |\Ll K_s B_{i_0},u\Rr|^2 h(z) N(\dif z,\dif s) \le
2\e\right)  \le \re^{-\delta \nu(\e^{\ell} \le |z| \le 1)}.
\end{split}
\Ee
By the definition of $\Lambda(t,u,\e^\ell)$ and $\theta |\log \e|^{-\gamma} \le \delta$,
the above inequality immediately implies the desired inequality \eqref{e:Tar22}.

\emph{Case 2}: If $|\Ll B_{i}, u\Rr| < |\log \e_0|^{-\gamma}$ for all $i \in \{1,...,d\}$,
by \eqref{e:HorTra} and \eqref{e:E012},
there exists some $i_1 \in \{1,...,d\}$ and some $\kappa_0>0$ so that
$$|\Ll \nabla a(x) B_{i_1},u\Rr| \ge \kappa_0 \ \ \ \forall x \in \R^d.$$
By Lemma \ref{l:UvUpB2}, as
$t \le \left(\theta \kappa_0\right) \wedge 1$, for all $\forall x \in \R^d$
the following relation holds:
\Bes
\Ll K_t\nabla a(x) B_{i_1},u\Rr \ge \kappa_0/2  \ \ {\rm or}\ \ \Ll K_t\nabla a(x) B_{i_1},u\Rr \le -\kappa_0/2.
\Ees
Therefore,
\Bes
\left|\int_0^t \Ll K_s\nabla a(x) B_{i_1},u\Rr \dif s\right| \ge \kappa_0 t/2 \ \ \ \ \forall t \le (\theta \kappa_0) \wedge 1.
\Ees
Choose
$$\tl \delta=\frac{8}{\kappa_0}|\log \e_0|^{-\gamma},$$
thanks to \eqref{e:E032}, the previous inequality, together with the easy relation
$$K_t(x) B_{i_1}=B_{i_1}-\int_0^t K_s(x) \nabla a(X_s) B_{i_1} \dif s, \ \ \forall x \in \R^d$$
implies
$$|\Ll K_t(x) B_{i_1}, u\Rr| \ge \kappa_0 t/2-|\log \e_0|^{-\gamma} \ge |\log \e_0|^{-\gamma} \ \  \ \ \forall t \in [\tl\delta/2, \tl \delta] \ \ \forall x\in \R^d.$$
By the same argument as in Case 1, we have
\Bes
\PP\left(\int_{\tl \delta/2}^{\tl \delta} \int_{\e^{\ell} \le |z| \le 1} |\Ll K_t B_{i_0},u\Rr|^2 h(z) N(\dif z,\dif s) \le 2\e\right) \le
\re^{-\frac{\tl \delta}2 \nu(\e^{\ell} \le |z| \le 1)} \ \ \ \forall \e \in (0,\e_0],
\Ees
By the same arguments as those below \eqref{e:C2Inq}, we get the desired inequality.
\end{proof}

\section{Appendix: The sketchy proof of \eqref{e:IBPGen}}
{\bf Step 1}: Define
$v^{\e}(z,t)=z+\e v(z,t)$, as $\e>0$ is sufficiently small $v^\e(.,t)$ as a function from $\R^d$ to $\R^d$ has a unique inverse. We denote this inverse by
$u^\e(z,t)$. Under our assumption it is easy to see that as $\e>0$ is sufficiently small,
\Be \label{e:UeZEst}
|u^\e(z,t)-z| \le C\e |z|^4 1_{|z| \le 2}(z).
\Ee
Further define
$$N^\e(\Gamma \times [0,t])=\int_0^t \int_{\R^d_0} 1_{\Gamma}(v^\e(z,s)) N(\dif z, \dif s),$$
it is easy to check that $N^\e$ has an intensity measure $\nu^\e$ satisfying
$$\nu^\e(\Gamma \times[0,t])=\int_0^t \int_{\R^d_0} 1_{\Gamma}(v^\e(z,s)) \nu(\dif z) \dif s.$$
As $\e>0$ is sufficiently small, the following Radon-Nikodym derivative always exists under our assumptions. A straightforward calculation gives
$$\frac{\dif \nu^\e}{\dif \nu \dif t}(z,t)=\frac{\rho(u^\e(z,t))}{\rho(z)}=:\varphi^\e(z,t),$$
where $\rho$ is the density function of $\nu$ defined in (H1).

{\bf Step 2}: Consider the following SDEs,
\Be \label{e:ZvEqn}
\dif Z^{\e}_t=(\varphi^\e(z,t)-1) \tl N(\dif z, \dif t), \ \ \ \ Z^\e_0=1,
\Ee
by It$\hat{o}$ formula we have
\Be \label{e:Zet}
Z^\e_t=\exp\{\int_0^t \log \varphi^\e(z,s) N(\dif z,\dif s)-\int_0^t  (\varphi^\e(z,s)-1) \nu(\dif z) \dif s\}.
\Ee
It is easy to check that $Z^\e_t$ is a martingale under our assumptions. Define a measure $\PP^{\e}$ which is determined by
\Be \label{e:RNDerP}
\frac{\dif \PP^\e}{\dif \PP}\bigg|_{\mcl F_t}=Z^\e_t, \ \ \ t>0,
\Ee
Thanks to \eqref{e:UeZEst}, we have
\Be \label{e:IntLim2}
\lim_{\e \rightarrow 0} \E |Z^\e_t|^2<\infty,
\Ee
\Be \label{e:IntLim3}
\lim_{\e \rightarrow 0} \E \left|\frac{(Z^\e_t)^{-1}-1}{\e}-\delta(V)\right|^2=0,
\Ee
where $\delta(V)$ is defined by \eqref{e:SkoInt}.
Define a process
$$L^\e_t=L_t+\int_0^t \int_{\R^d_0} \e v(z,s) N (\dif z,\dif s).$$
The crucial Girsanov type lemma holds:
\begin{lem} \label{l:Gir1}
The law of the process $(L^\e_t)_{t \ge 0}$ under $\PP^\e$ is the same as that of the process $(L_t)_{t \ge 0}$ under
$\PP$.
\end{lem}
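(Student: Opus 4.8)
The plan is to prove Lemma~\ref{l:Gir1} by the standard Girsanov–Radon–Nikodym argument for Poisson random measures, verifying that the characteristic functional of $(L^\e_t)_{t\ge 0}$ under $\PP^\e$ coincides with that of $(L_t)_{t\ge 0}$ under $\PP$. First I would recall from Step~1 that $N^\e(\dif z,\dif t)=N(v^\e(z,t)^{-1}(\dif z),\dif t)$ is, under $\PP$, a Poisson random measure with (predictable) intensity $\nu^\e(\dif z,\dif t)=\varphi^\e(z,t)\,\nu(\dif z)\,\dif t$; the key point is that $v^\e(\cdot,t)$ is a bijection of $\R^d$ for small $\e$, so $N^\e$ carries exactly the same information as $N$, and in terms of $N^\e$ one has $L^\e_t=\int_0^t\int_{|z|\le 1} z\,\tl N^\e(\dif z,\dif s)+\int_0^t\int_{|z|>1} z\,N^\e(\dif z,\dif s)$ (up to the compensator bookkeeping). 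Thus the law of $L^\e$ is a functional of $N^\e$ in precisely the same way that the law of $L$ is a functional of $N$.

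Next I would invoke the exponential martingale $Z^\e_t$ from \eqref{e:Zet}, which is of the classical Doléans–Dade form for the compensated measure $\tl N$, together with the fact (asserted in Step~2, and following under (H1) and \eqref{e:UeZEst} from e.g. the $L^2$-bound \eqref{e:IntLim2} plus Novikov/exponential-moment criteria for Poisson integrals as in \cite[Theorem 25.3]{Sa90}) that $(Z^\e_t)_{t\ge 0}$ is a genuine $\PP$-martingale with $\E Z^\e_t=1$, so that $\PP^\e$ in \eqref{e:RNDerP} is a well-defined probability measure equivalent to $\PP$ on each $\mcl F_t$. Then the core computation is the change-of-intensity formula: under $\PP^\e$, the random measure $N$ becomes a Poisson random measure with intensity $\varphi^\e(z,t)^{-1}\nu(\dif z)\dif t$ — equivalently, and this is the cleaner statement, under $\PP^\e$ the measure $N^\e$ has intensity $\nu(\dif z)\dif t$, exactly the $\PP$-law of $N$. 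This is verified by computing, for a bounded predictable $g$, the functional $\E^{\PP^\e}\exp\{\int_0^t\int g\,N^\e\}=\E^{\PP}[Z^\e_t\exp\{\int_0^t\int g\,N^\e\}]$ and checking it equals $\exp\{\int_0^t\int(e^{g}-1)\nu(\dif z)\dif s\}$, which is the exponential formula for a Poisson measure with intensity $\nu\,\dif t$; the algebra here is exactly the classical Girsanov computation for point processes (one expands $Z^\e_t$ via \eqref{e:Zet}, changes variables $z\mapsto v^\e(z,s)$ in the intensity integral, and uses $\varphi^\e$ as the Jacobian-type density). Once $N^\e$ under $\PP^\e$ has the same law as $N$ under $\PP$, and $L^\e$ is built from $N^\e$ by the same recipe that builds $L$ from $N$, the claimed equality of laws follows.

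I expect the main obstacle to be the careful bookkeeping of the compensators: the jumps of size $\le 1$ appear in \eqref{e:StaDef} through the compensated measure $\tl N$, while the change of measure naturally acts on $N$ itself, so one must check that the drift correction coming from recompensating $\int z\,\tl N^\e$ versus $\int z\,\tl N$ matches up (this is where the definition $L^\e_t=L_t+\int_0^t\int \e v(z,s)N(\dif z,\dif s)$, rather than a compensated version, plays its role, and where one needs the integrability $\int_{|z|\le 1}|v^\e(z,t)-z|\,\nu(\dif z)<\infty$ guaranteed by \eqref{e:UeZEst} and (H1)). A secondary technical point, already flagged in Step~2, is justifying that $Z^\e_t$ is a true martingale and not merely a local one, which under the quartic cutoff $h$ in \eqref{e:SpeH} and the bounds on $\vartheta$ in (H1) is exactly the kind of estimate handled by \cite[Theorem 25.3]{Sa90}. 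Modulo these points, the proof is the textbook Girsanov theorem for Lévy-type processes and can be kept short, with the reader referred to \cite{Bis82,Bis83,BiGrJa86} for the full details.
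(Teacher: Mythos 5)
Your argument is essentially the paper's: the paper disposes of this lemma in one line by comparing characteristic functions of finite discretizations of $L$ under $\PP$ and $L^\e$ under $\PP^\e$, or by citing \cite[Theorem 6.16]{BiGrJa86}, which is precisely the Girsanov change-of-intensity statement for Poisson random measures that you verify via the exponential formula for $N^\e$ under $\PP^\e$. Your write-up simply fills in the standard details (martingale property of $Z^\e_t$, compensator bookkeeping) that the paper delegates to the reference, so the two proofs coincide in substance.
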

\begin{proof}
By checking the characteristic functions of the arbitrary finite discretization of $L_t$ under $\PP$ and that of
$L^{\e}_t$ under $\PP^\e$ or by referring to \cite[Theorem 6.16]{BiGrJa86}.
\end{proof}

{\bf Step 3}:Consider the SDEs
\Bes
\begin{cases}
\dif X^\e_t=a(X^\e_t)\dif t+B \dif L^\e_t, \\
X^\e_0=x,
\end{cases}
\Ees
where $L^\e_t=L_t+\e \int_0^t \int_{\R^d_0} v(z,s) N(\dif z, \dif s)$. By Lemma \ref{l:Gir1},
for all $t>0$ the law of $X^\e_t$
under $\PP^\e$ and the law of $X_t$ under $\PP$ are the same.
Hence, for all $f \in C^1_b$,
\begin{align*}
\E\left[D_V f(X_t)\right]&=\lim_{\e \rightarrow 0} \E\left(\frac{f(X^\e_t)-f(X_t)}{\e}\right) \\
&=\lim_{\e \rightarrow 0} \frac1{\e} \left(\E f(X^\e_t)-\E^\e f(X^\e_t)\right) \\
&=\lim_{\e \rightarrow 0}  \E^\e \left[f(X^\e_t) \frac1{\e} \left(\frac{\dif \PP}{\dif \PP^\e}-1\right)\right] \\
&=\lim_{\e \rightarrow 0}  \E\left[f(X^\e_t) \frac{(Z^\e_t)^{-1}-1}{\e} Z^\e_t\right].
\end{align*}
where the first equality is thanks to $f \in C^1_b(\R^d)$ and the fact that $\frac{X^\e_t-X_t}{\e}$ is uniformly integrable.
By \eqref{e:IntLim2} and \eqref{e:IntLim3}, the above relation immediately gives the desired formula \eqref{e:IBPGen}. By a classical extension
procedure, we can show that the formula \eqref{e:IBPGen} also holds for $f \in C_b(\R^d)$.


\bibliographystyle{amsplain}

\end{document}